\newtheorem{thm}{Theorem}[section]
\newtheorem{proposition}{Proposition}[section]
\newtheorem{prop}{Proposition}[section]
\newtheorem{lemma}{Lemma}[section]
\newtheorem{cor}{Corollary}[section]
\theoremstyle{remark}
\newcommand{\R}{\mathbb R}
\newcommand{\Z}{\mathbb Z}
\newcommand{\N}{\mathbb N}
\newcommand{\A}{{\mathcal A}}
\newcommand{\RRR}{{\mathfrak R}}
\newcommand{\EEE}{{\mathfrak E}}
\newcommand{\ra}{\rightarrow}
\newcommand{\s}{\sigma}
\title{ Symmetric intersections of Rauzy fractals} 
\date{\today}
\author[T. Sellami]{Tarek Sellami$^1$}
\author[V.F. Sirvent]{V\'{\i}ctor F. Sirvent$^2$}
\address{$^1$Sfax University, Faculty of sciences of Sfax, Department of mathematics, Route Soukra
BP 802, 3018 Sfax, Tunisia.}
\email{tarek.sellami.math@gmail.com} 
\address{$^2$Departamento de Matematicas, Universidad Sim\'{o}n
Bol\'{\i}var, Apartado 89000, Caracas 1086-A, VENEZUELA.}
\email{vsirvent@usb.ve}
\urladdr{http://www.ma.usb.ve/\textasciitilde  vsirvent}
\subjclass[2010]{28A80,  11B85, 37B10}
\keywords{Rauzy fractals, substitution dynamical systems, balanced pair algorithm, symmetry groups, Pisot conjecture}
\begin{document}

\begin{abstract}
In this article we study symmetric subsets of Rauzy fractals of unimodular irreducible Pisot substitutions.
The symmetry considered is reflection through the origin.
Given an unimodular irreducible Pisot substitution, we consider the intersection of its 
Rauzy fractal with the Rauzy fractal of the reverse substitution.
This set is symmetric and it is obtained by the balanced pair algorithm associated with both substitutions.
\end{abstract}

\maketitle

\section{Introduction}\label{sec:intro}
The Rauzy fractal is an important object in the study of dynamical systems associated 
with the Pisot substitutions, in particular it plays a fundamental role in the study of the  Pisot conjecture. 
Geometrical and topological properties of  Rauzy fractals have been studied extensively, 
see among other references~\cite{arnoux-ito,canterini-siegel,holton-zamboni,pytheas_fogg,rauzy,siegel-thuswaldner,sirvent-wang}.
Symmetries in Rauzy fractals were studied in~\cite{sirvent:2}, 
in relation to symmetries that exhibit the symbolic languages 
which define the Rauzy fractals.  
In the present paper we continue the study the symmetric structure of these sets.
We consider the Rauzy fractal of a unimodular irreducible substitution and 
the fractal of its reverse substitution, 
in section~\ref{sec:substitutions} we give definitions of these objects. 
We show that the intersection of these two sets is invariant under reflection through the origin (Corollary~\ref{cor:symmetricfractal}).
Later we show that this set, is obtained by running the balanced pair algorithm of the original substitution and its reverse substitution (Theorem~\ref{thm:main}).

The balanced pair algorithm was introduced by Livshits~(\cite{livshits}) in the context of  the Pisot conjecture, it was also used in ~\cite{sirvent-solomyak} in the same context.
A variant of this algorithm was used later by the first author in~\cite{sellami:1,sellami:2},  
in the study of the intersection of Rauzy fractals associated with 
different substitutions having the same incidence matrix.
This version of the balanced pair algorithm is used in the present article, 
we describe it in section~\ref{sec:bpa}.
The intersection of Rauzy fractals of substitutions having the same incidence matrix,  
has been studied previously in~\cite{sing-sirvent}.

In section~\ref{sec:examples} we present some examples, in particular a well known family of Pisot substitutions (Example 2). 
We describe the balanced pair algorithm in detail for these examples and the intersection of the corresponding Rauzy fractals.
We end the paper with a section of open problems and remarks.

%%%%%%%%%%%%%%%%%%%%%%%%%%%%%%%%%%%%%%%%%%%%%%%%%%%%%%

\section{Substitutions and Rauzy fractals}\label{sec:substitutions}
A substitution on a finite alphabet $\A=\{1,\ldots,k\}$ is a map $\s$ from 
$\A$ to the set of finite words in $\A$, i.e., ${\A}^{*}=\cup_{i\geq 0}{\A}^{i}$. 
The map $\sigma$ is extended to $\A^*$ by concatenation, i.e., 
 $\s(\emptyset)=\emptyset$ and $\s(UV)=\s(U)\s(V)$, for all $U$, $V\in\A^*$.
Let $U$ be a word in $\A$, we denote  by $|U|$ the length of $U$. 
We denote by
$[\sigma(i)]_j$ the $j$-th symbol of the word $\sigma(i)$, i.e.,
$\sigma(i)=[\sigma(i)]_1\cdots[\sigma(i)]_{|\sigma(i)|}$. 
 
 \smallskip
 
Let $A^{\N}$ (respectively $A^{\Z}$) denote the set of one-sided 
(respectively two-sided) infinite sequences in $\A$. 
The map $\sigma$, is extended to $\A^{\N}$ and $\A^{\Z}$ in the obvious way:
Let $u=\ldots u_{-1}\dot{u_0}u_1\ldots$ be an element of $\A^{\Z}$, where the dot is used to denote the zeroth position. 
So $\s(u)$ is of the form:
$$
\cdots[\s(u_{-1})]_{1}
\cdots[\s(u_{-1})]_{|\s(u_{-1})|}
[\dot{\s(u_0)}]_1\cdots[\s(u_0)]_{|\s(u_0)|}[\s(u_1)]_1\cdots.
$$

\smallskip

We call $u\in\A^{\N}$ (or  $u\in\A^{\Z}$) a {\em fixed point} of $\s$ 
if $\s(u)=u$ and {\em periodic} if there exists $l>0$ so that it is fixed for $\sigma^l$.

\smallskip

We write $l_i(U)$ for the number of occurrences of the symbol 
$i$ in the word $U$ and denote the vector ${\bf l}(U)=(l_1(U),\ldots,l_k(U))^{t}$. 
The {\em incidence matrix} of the substitution $\s$ is defined as the matrix 
$M_{\s}=M=(m_{ij})$ whose entries $m_{ij}=l_i(\s(j))$ , for $1\leq i,j \leq k$.
Note that $M_{\s}({\bf l}(U))={\bf l}(\s(U))$, for all $U\in\A^*$.
We say the substitution is {\em primitive} if its incidence matrix is 
primitive, i.e., all the entries of  $M^r$ are positive for some $r>0$.

\smallskip

For a primitive substitution there are a finite number of periodic points. 
So we shall assume the substitution has always a fixed point, since we can replace the substitution by a suitable power.
Let $u$ be fixed point of $\sigma$,  
we consider the dynamical system $(\Omega_u,S)$, 
where $S$ is the shift map on $\A^{\N}$ (respectively on $\A^{\Z}$) defined by 
$S(v_0v_1\cdots)=v_1\cdots$ (respectively $S(v)=w$, 
where $w_{i}=v_{i+1}$) and $\Omega_u$ is the closure, in the product topology,  of the orbit 
of the fixed point $u$ under the shift map $S$.

\smallskip

A {\em Pisot number} is a real algebraic integer greater than $1$ such that its Galois conjugates are of norm smaller than $1$.
The Pisot numbers are also known in the literature as {\em Pisot-Vijayaraghavan} or {\em PV numbers}.
We say that a substitution is Pisot  if the Perron-Frobenius eigenvalue of the the incidence matrix is a Pisot number.
A substitution is  {\em irreducible Pisot}  if it is Pisot and the characteristic polynomial of the incidence matrix is irreducible.
An irreducible Pisot substitution is primitive~\cite{canterini-siegel}.
A substitution is {\em unimodular } if  the absolute value of the determinant 
of its incidence matrix is $1$.

\smallskip

There is a long standing conjecture that the dynamical system 
associated with a unimodular irreducible Pisot substitution is measurably  
conjugate to a translation on a $(k-1)$-dimensional torus 
({\em cf.}~\cite{rauzy,solomyak,vershik-livshits}).
This conjecture is known in literature as the Pisot conjecture.
G. Rauzy approached it via geometrical realization of the symbolic system. 
He proved it in the case of the tribonacci substitution, 
$\s(1)=12$, $\s(2)=13$ and $\s(3)=1$ ({\em cf.}~\cite{rauzy}).
In his proof,   the construction of a set in $\R^2$, in general $\R^{k-1}$, plays an important role. 
This set is  known as the Rauzy fractal associated with the substitution.
For references on  conditions under which the Pisot conjecture is true, 
among other references see~\cite{arnoux-ito,barge-diamond,barge-kwapisz,CANT,canterini-siegel,LMS,livshits,pytheas_fogg,rauzy,sing,sirvent-solomyak,sirvent-wang,solomyak,vershik-livshits}.
Before we define Rauzy fractals, we have to introduce some constructions and notation.

\smallskip

Let $\s$ be a unimodular Pisot substitution and 
$\lambda$ the Perron-Frobenius eigenvalue of the incidence matrix $M$, 
so $\lambda$ is a Pisot number.
The characteristic polynomial of $M$ might be reducible, so algebraic degree  of 
 $\lambda$ is smaller or equal that $k$,  the cardinality of the alphabet $\A$.
We decompose $\R^k$ into a direct sum of subspaces, determined by the eigenvualues of $M$.
In particular, we consider:
\begin{itemize}
\item  Let $E^u$ be the {\em expanding space}, i.e. the $\lambda$-expanding space of $M$, 
the eigen-space associated with the eigenvalue $\lambda$.
\item 
Let  $E^s$ be the {\em contracting space}. i.e.  the $\lambda$--contracting  space of $M$, the direct sum of
  the eigen-spaces associated with the Galois conjugates of $\lambda$.
\item  
 Let $E^c$ be the {\em complementary space}, i.e. the direct sum of
 the eigen-spaces associated with the remaining eigenvalues of $M$. 
 \end{itemize}
So, by the definition of the subspaces, we have
 $\R^k=E^u\oplus E^s\oplus E^c$.
 The space $E^c$ is trivial if and only if the substitution is irreducible.
Let $\pi:\R^k\ra E^s$ be the projection of $\R^k$ onto $E^s$ along 
$E^u\oplus E^c$.

\smallskip

Let $u=\ldots u_{-2}u_{-1}u_0u_1u_2\ldots$ be a  fixed point of $\sigma:\A^{\Z}\ra\A^{\Z}$, 
consider the polygonal line or stepped line $(L_n)_{n\in\Z}$ on $\R^k$, given by
$$
L_n:=\left\{\begin{array}{lc}
                \sum_{i=0}^n e_{u_i} &\text{ if } n\geq 0\\
                & \\
                \sum_{i=n-1}^{-1} -e_{u_{i}} &\text{ if } n<0,
                \end{array}\right.
$$
where $\{e_1,\ldots,e_k\}$ is the canonical basis of $\R^k$.
 
We define the {\em Rauzy fractal} associated with $\s$, as
$$
\RRR_{\sigma}:=\overline{\left\{\pi(L_n)\,|\, n\in\Z \right\}}.
$$
We shall also use the notation $\RRR$ for $\RRR_{\sigma}$, whenever the context is clear.
If we consider $\overline{\left\{\pi(L_n)\,|\, n\in\N \right\}}$,
 we obtain the same set, ({\em cf.}~\cite{canterini-siegel});
 see Figure~\ref{fig:projection}.
The construction of the Rauzy fractal, 
does not depend on the selection of the fixed or periodic point of $\sigma$ ({\em cf.}~\cite{ pytheas_fogg}).

\smallskip

The Rauzy fractals are bounded~(\cite{holton-zamboni}), 
they are the closure of their interior~(\cite{sirvent-wang}). 
See~\cite{siegel-thuswaldner}, for a study of different topological properties of these sets. 
The reducible case is studied in~\cite{EIR}.

\begin{figure}%[tbp]
\begin{center}
%\scalebox{0.55}{\includegraphics{ligne.eps}}
%\scalebox{0.55}{\includegraphics{ligne3D2.eps}}
\scalebox{0.6}{\input{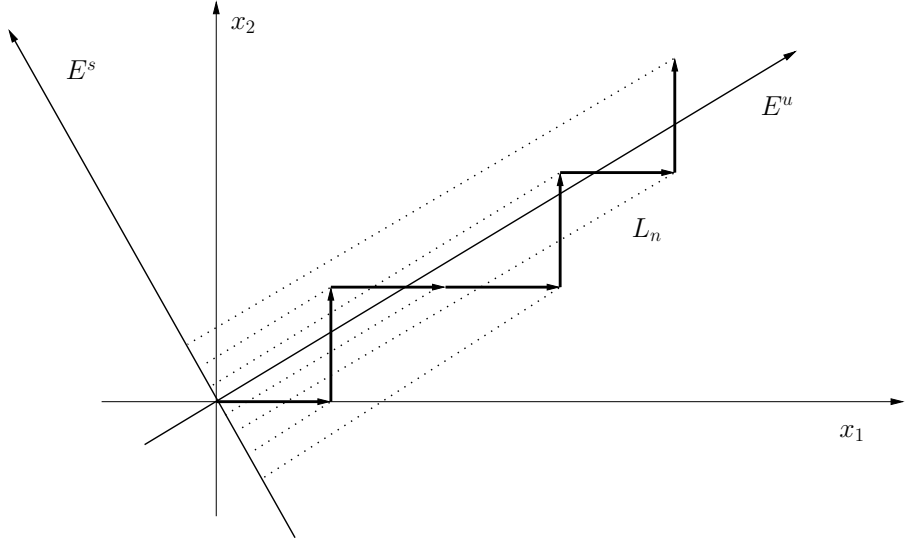} }
%\caption{\label{fig:projection} Polygonal line projection in the Rauzy fractal construction
\caption{\label{fig:projection}Polygonal line projection in the Rauzy fractal construction for a substitution with $2$ symbols.
}
\end{center}
\end{figure}

\medskip

Let $\hat{\sigma}:\A\ra\A^*$ be the {\em reverse substitution of} $\sigma$, defined as follows: 
$$[\hat{\sigma}(i)]_{j}=[\sigma(i)]_{|\sigma(i)|-j+1}.$$
If $\sigma$  is the tribonacci substitution: $1\ra12$, $2\ra13$, $3\ra 1$, then 
$\hat{\sigma}$ is $1\ra 21$, $2\ra 31$, $3\ra 1$.

\smallskip

\begin{prop}\label{prop:FP}
Let $\sigma$ be a substitution such that  it has a fixed point $u=\ldots u_{-1}u_0u_1\ldots$.
Let  $\hat{\sigma}$ be the reverse substitution of $\s$. 
Then $\hat{\sigma}$ has a fixed point, 
$\hat{u}=\ldots\hat{u}_{-1}\hat{u}_0\hat{u}_1\ldots$, 
with the property $\hat{u}_i=u_{-i-1}$, for $i\in\Z$.
\end{prop}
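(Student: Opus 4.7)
The approach is a direct position-by-position verification: unwind the condition ``$u$ is a fixed point of $\sigma$ on $\A^{\Z}$'', and check that the candidate $\hat u$, defined by $\hat u_i := u_{-i-1}$, satisfies the analogous condition for $\hat\sigma$. The key auxiliary fact I would establish first, at the level of finite words, is that if $\mathrm{rev}(w)$ denotes the string reversal of $w \in \A^{*}$, then
\[
\mathrm{rev}(\sigma(w)) \;=\; \hat\sigma(\mathrm{rev}(w)).
\]
This is immediate from the defining identity $[\hat\sigma(i)]_{j} = [\sigma(i)]_{|\sigma(i)|-j+1}$ on letters together with the obvious $\mathrm{rev}(UV) = \mathrm{rev}(V)\mathrm{rev}(U)$ for concatenation.

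Next I would split the fixed-point equation $\sigma(u) = u$ into two one-sided halves. Since $\sigma$ preserves the dot at position $0$ (the boundary between $\sigma(u_{-1})$ and $\sigma(u_0)$ lands exactly at the dot of $\sigma(u)$), the equation $\sigma(u)=u$ is equivalent to
\[
\sigma(u_0)\sigma(u_1)\sigma(u_2)\cdots = u_0 u_1 u_2 \cdots
\quad\text{and}\quad
\cdots\sigma(u_{-3})\sigma(u_{-2})\sigma(u_{-1}) = \cdots u_{-3}u_{-2}u_{-1}.
\]
Reversing the second identity and invoking the word-level lemma converts its left-hand side into the infinite concatenation $\hat\sigma(u_{-1})\hat\sigma(u_{-2})\hat\sigma(u_{-3})\cdots$ and its right-hand side into $u_{-1}u_{-2}u_{-3}\cdots$, which by the definition of $\hat u$ is exactly $\hat u_0 \hat u_1 \hat u_2 \cdots$. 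This yields $\hat\sigma(\hat u_0)\hat\sigma(\hat u_1)\cdots = \hat u_0 \hat u_1 \cdots$. Reversing the first identity the same way produces $\cdots \hat\sigma(\hat u_{-2})\hat\sigma(\hat u_{-1}) = \cdots \hat u_{-2}\hat u_{-1}$, and combining the two halves gives $\hat\sigma(\hat u) = \hat u$.

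The only point demanding care is bookkeeping around the dot: one must verify that, in $\hat\sigma(\hat u)$, the concatenation of $\hat\sigma(\hat u_{-1})$ with $\hat\sigma(\hat u_0)$ joins precisely at position $0$ of $\hat u$. But in $\sigma(u)=u$ the dot separates $\sigma(u_{-1})$ from $\sigma(u_0)$; after reversal, this same boundary becomes the junction between $\hat\sigma(\hat u_0)$ (starting at position $0$) and $\hat\sigma(\hat u_{-1})$ (ending at position $-1$), which is exactly the dot of $\hat u$. So the dot is respected automatically, and beyond this index alignment I do not foresee any genuine obstacle.
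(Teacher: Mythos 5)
Your proof is correct and follows essentially the same route as the paper's: both define the candidate fixed point by $\hat{u}_i = u_{-i-1}$ and verify directly that reversing each block $\sigma(u_j)$ yields $\hat{\sigma}(u_j)$, with the block boundaries (including the one at the dot) aligning as required. Your explicit word-level identity $\mathrm{rev}(\sigma(w))=\hat{\sigma}(\mathrm{rev}(w))$ is just a slightly more formalized packaging of the same computation.
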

\begin{proof}
Let $u=\ldots u_{-1}\dot{u_0}u_1\ldots$ be a fixed point of $\s$, where the dot is used to denote the zeroth position. 
So $u$ is of the from:
$$
\cdots[\s(u_{-1})]_{1}
\cdots[\s(u_{-1})]_{|\s(u_{-1})|}
[\dot{\s(u_0)}]_1\cdots[\s(u_0)]_{|\s(u_0)|}[\s(u_1)]_1\cdots.
%[\s(u_1)]_{|\s(u_1)|}\cdots.
$$
Let $v\in\A^{\Z}$ defined by $v_i=u_{-i-1}$, for $i\in\Z$, so $v$ is of the form
$$
\cdots[\s(u_0)]_{|\s(u_0)|}\cdots[\s(u_0)]_{1}[\dot{\s(u_{-1})}]_{|\s(u_{-1})|}[\s(u_{-1})|_{|\s(u_{-1})|-1}\cdots
[\s(u_{-1})]_1\cdots.
$$
By the definition of the reverse substitution $\hat{\s}$:
$$
v=\cdots\hat{\s}(u_0)\hat{\s}\dot{(u_{-1})}\hat{\s}(u_{-2})\cdots.
$$
Hence $v=\hat{\s}(v)$, i.e., $v$ is a fixed point for $\hat{\s}$.
\end{proof}

\smallskip

\begin{prop}
Let $\widehat{L}_n$ be the polygonal line associated with 
the reverse substitution of $\sigma$. Then for all integer $n$ we have:
$$
\pi(L_n)=-\pi(\widehat{L}_{-n}).
$$   
\end{prop}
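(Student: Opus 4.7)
The plan is to carry out a direct computation by unfolding the definitions of $L_n$ and $\widehat{L}_{-n}$ and invoking the relation $\hat{u}_i = u_{-i-1}$ supplied by Proposition~\ref{prop:FP}. Since $\pi$ is linear, it suffices to establish the vector identity $L_n = -\widehat{L}_{-n}$ in $\R^k$ and then project.

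First I would treat the case $n > 0$. Here $-n < 0$, so by the definition of the stepped line applied to the reverse substitution,
$$
\widehat{L}_{-n} \;=\; -\sum_{i=-n-1}^{-1} e_{\hat{u}_i} \;=\; -\sum_{i=-n-1}^{-1} e_{u_{-i-1}},
$$
where the second equality uses Proposition~\ref{prop:FP}. The change of index $j = -i-1$ takes $i=-n-1$ to $j=n$ and $i=-1$ to $j=0$, and rewrites the sum as $-\sum_{j=0}^{n} e_{u_j} = -L_n$. Hence $\widehat{L}_{-n} = -L_n$, and applying $\pi$ yields the claim.

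Next I would carry out the symmetric computation for $n < 0$. Writing $n = -m$ with $m > 0$, I would expand $\widehat{L}_{m} = \sum_{i=0}^{m} e_{\hat{u}_i}$, substitute $\hat{u}_i = u_{-i-1}$, and apply the same change of index $j = -i-1$; this reorganizes the sum as $\sum_{j=-m-1}^{-1} e_{u_j}$, which is exactly $-L_{-m}$ by definition. So again $L_n = -\widehat{L}_{-n}$, and projecting by $\pi$ gives the stated equality.

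The argument is essentially bookkeeping: the only step requiring care is the change of summation variable $j = -i-1$, which is the geometric counterpart of the symbolic reversal $\hat{u}_i = u_{-i-1}$. I do not expect any substantive obstacle; the proposition is a direct translation of the identity from Proposition~\ref{prop:FP} into the language of stepped lines, combined with the linearity of $\pi$.
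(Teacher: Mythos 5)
Your proposal is correct and follows essentially the same route as the paper: substitute $\hat{u}_i=u_{-i-1}$ from Proposition~\ref{prop:FP} into the definition of the stepped line, re-index by $j=-i-1$ to identify $\widehat{L}_{-n}$ with $-L_n$, and apply linearity of $\pi$ (the paper does the computation for $\widehat{L}_n$, $n\geq 0$, and says ``similarly'' for the other case, which is only a reparametrization of what you wrote). The only cosmetic difference is that you split into $n>0$ and $n<0$ and leave $n=0$ implicit, but that case is an artifact of the paper's off-by-one convention for $L_n$ and is not a substantive gap.
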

\begin{proof}
By definition, $\widehat{L}_n=\sum_{i=0}^n e_{\hat{u}_i}$, if $n\geq 0$.
Due to Proposition~\ref{prop:FP} 
$$\widehat{L}_n=\sum_{i=0}^n e_{u_{-i-1}}=\sum_{i=-n-1}^{-1} e_{u_{i}}=-L_{-n}.
$$
 Similarly for $n<0$.
Since the projection $\pi$ is linear, we have 
$\pi(L_{-n})=-\pi(\widehat{L}_n)$.
\end{proof}

\smallskip

The incidence matrices of $\sigma$ and $\hat{\sigma}$ are the same, 
so both substitutions have the same spectral properties~({\em cf.}~\cite{queffelec}). 
Therefore we can define the Rauzy fractal associated with $\hat{\sigma}$,
since $\sigma$ is a unimodular irreducible Pisot substitution.
From the previous Proposition follows the next result: 

\begin{cor}\label{cor:symmetricfractal}
Let $\s$ be a unimodular irreducible Pisot substitution 
and  $\hat{\s}$ its reverse substitution. 
Let $\RRR_{\s}$ and $\RRR_{\hat{\s}}$ be the corresponding Rauzy fractals.
Then $\RRR_{\s}=-\RRR_{\hat{\s}}$.   
Moreover
$\RRR_{\s}\cap\RRR_{\hat{\s}}$ is a symmetric set with respect to the origin.
\end{cor}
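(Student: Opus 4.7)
The plan is to use the preceding proposition $\pi(L_n) = -\pi(\widehat{L}_{-n})$ directly: the identity $\RRR_\s = -\RRR_{\hat\s}$ will follow by reindexing and taking closure, and the symmetry of the intersection is then a purely set-theoretic consequence.

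First I would justify that $\RRR_{\hat\s}$ is well defined. Since the incidence matrices of $\s$ and $\hat\s$ coincide, $\hat\s$ is also unimodular irreducible Pisot, the associated expanding/contracting/complementary decomposition of $\R^k$ is the same, and the projection $\pi$ onto $E^s$ along $E^u\oplus E^c$ used in both constructions is identical. Then, by Proposition~\ref{prop:FP}, $\hat\s$ has a fixed point compatible with the one of $\s$ under the index reversal $i\mapsto -i-1$, so we may form $\RRR_{\hat\s}=\overline{\{\pi(\widehat{L}_n):n\in\Z\}}$ and apply the previous proposition to this very pair of fixed points.

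Next, to prove $\RRR_\s=-\RRR_{\hat\s}$, I would chain equalities
\[
\RRR_\s=\overline{\{\pi(L_n):n\in\Z\}}
=\overline{\{-\pi(\widehat{L}_{-n}):n\in\Z\}}
=-\overline{\{\pi(\widehat{L}_{m}):m\in\Z\}}
=-\RRR_{\hat\s},
\]
where the second equality uses the previous proposition, the third uses that $x\mapsto -x$ is a homeomorphism (so it commutes with closure) together with the reindexing $m=-n$, which is a bijection of $\Z$, and the fourth is the definition of $\RRR_{\hat\s}$.

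For the moreover-part I would take any $x\in\RRR_\s\cap\RRR_{\hat\s}$ and observe that $x\in\RRR_\s=-\RRR_{\hat\s}$ forces $-x\in\RRR_{\hat\s}$, while $x\in\RRR_{\hat\s}=-\RRR_\s$ forces $-x\in\RRR_\s$; hence $-x\in\RRR_\s\cap\RRR_{\hat\s}$, giving the reflection symmetry through the origin. There is no real obstacle here: the only thing to watch is that taking closures commutes with the linear map $x\mapsto -x$ and with the bijective reindexing, both of which are immediate.
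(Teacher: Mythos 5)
Your proposal is correct and follows exactly the route the paper intends: the corollary is derived from the preceding proposition $\pi(L_n)=-\pi(\widehat{L}_{-n})$ together with the observation that $\s$ and $\hat{\s}$ share the same incidence matrix (hence the same projection $\pi$), and the symmetry of the intersection is the same set-theoretic consequence. You merely spell out the closure and reindexing steps that the paper leaves implicit.
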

%\begin{proof}\end{proof}

In Theorem~\ref{thm:main}, we show that if the substitution satisfies some additional and natural hypotheses,
the set $\RRR_{\s}\cap\RRR_{\hat{\s}}$ has non-empty interior.

%%%%%%%%%%%%%%%%%%%%%%%%%%%%%%%%%%%%%%%%%%%%%%%%%%%
\subsection{Balanced pair algorithm}\label{sec:bpa}
In this section we introduce the balanced pair algorithm for two substitutions 
$\sigma_1$ and $\sigma_2$ having the same incidence matrix. 
We shall assume that the substitutions are primitive.
This algorithm was introduced in~\cite{sellami:1} and~\cite{sellami:2}, in the context of the study of 
intersection of Rauzy fractals.

\smallskip

Let $U$ and $V$ be two finite words, we say that 
$\begin{pmatrix}
U\\
V\\
\end{pmatrix}$ 
is  a {\em balanced pair} if ${\bf l}(U) = {\bf l}(V)$, 
where ${\bf l}(U)$ is the $k$-dimensional vector that gives the occurrences of the different symbols of the word $U$.

\smallskip

Given a word $U$ we denote by $\langle U\rangle_m$ the proper prefix of $U$ of length $m$.
A {\em minimal balanced  pair} is a balanced pair 
$\begin{pmatrix}
U\\
V\\
\end{pmatrix}$, such that 
${\bf l}(\langle U\rangle_m) \neq {\bf l}(\langle V\rangle_m)$, for $1\leq m < |U|$.

\smallskip

Let $\sigma_1$ and $\sigma_2$ be two irreducible Pisot substitutions with the same incidence matrix.   
Let $u$ and $v$ be the elements of $\A^{\N}$, which are  fixed points of $\s_1$ and $\s_2$, respectively.
We define the balanced pair algorithm associated with the substitutions $\s_1$ and $\s_2$ as follows:

\smallskip

\noindent
We suppose that there exist prefixes $U$ and $V$ of $u$ and $v$, respectively, such that $\begin{pmatrix} U\\ V\end{pmatrix}$ is a minimal balanced pair. 
We call this pair the first minimal balanced pair, of $u$ and $v$.
Under the right hypotheses, considered in section~\ref{sec:intersection}, the first minimal balanced pair always exists.
We apply the substitutions $\s_1$ and $\s_2$ to this balanced pair, in the following manner
$\begin{pmatrix} U\\ V\end{pmatrix}\rightarrow \begin{pmatrix} \sigma_1(U)\\ \sigma_2( V)\end{pmatrix}$. 
Since the substitutions $\s_1$ and $\s_2$ have the same incidence matrix, the pair 
$ \begin{pmatrix} \sigma_1(U)\\ \sigma_2( V)\end{pmatrix}$ is minimal.
We consider this new balanced pair and we decompose it into minimal balanced pairs.
We repeat this procedure to each of this new minimal balanced pairs.
Under the right hypotheses, considered in the next section, 
the set of minimal balanced pairs  is finite, and  the algorithm terminates.

%%%%%%%%%%%%%%%%%%%%%%%%%%%%%%%%%%%%%%%%%%%%%%%%%%%%%%
\section{Intersection of Rauzy fractals}\label{sec:intersection}

Let $\sigma_1$ and $\sigma_2$ be two unimodular  irreducible Pisot substitutions with the same incidence matrix.  
We consider their respective  Rauzy fractals  $\RRR_{\sigma_1}$ and $\RRR_{\sigma_2}$.  
Since the origin is always an element of $\RRR_{\s_1}$ and $\RRR_{\s_2}$, 
the intersection of  $\RRR_{\sigma_1}$ and $\RRR_{\sigma_2}$ is non-empty, and it is a compact set because it is intersection of two compacts sets.
Let $\EEE$ be the closure of the intersection of the interior of $\RRR_{\sigma_1}$
and the interior of $\RRR_{\sigma_2}$. 
Through out the article, we shall assume that $0$ is an interior point of one of the Rauzy fractals.

\begin{proposition}
Let $\sigma_1$ and $\sigma_2$ be two unimodular irreducible Pisot substitution with the same incidence matrix. We consider $\RRR_{\sigma_1}$ and $\RRR_{\sigma_2}$ their associated Rauzy fractal. We suppose that $0$ is an inner point to $\RRR_{\sigma_1}$. Then  the set  $\EEE$ has
non-empty interior and strictly positive Lebesgue measure.
\end{proposition}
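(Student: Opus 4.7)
The plan is to exploit the fact, cited earlier in the paper from~\cite{sirvent-wang}, that each Rauzy fractal is the closure of its own interior. This immediately gives plenty of interior points of $\RRR_{\sigma_2}$ arbitrarily close to any point of $\RRR_{\sigma_2}$, and in particular close to the origin, which lies in $\RRR_{\sigma_2}$ by the general fact noted at the beginning of Section~\ref{sec:intersection}.

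My first step is to use the hypothesis that $0$ is an interior point of $\RRR_{\sigma_1}$ to choose an open ball $B$ with $0\in B\subset\RRR_{\sigma_1}$. Since $0\in\RRR_{\sigma_2}=\overline{\mathrm{int}(\RRR_{\sigma_2})}$, there exists a point $x\in B\cap\mathrm{int}(\RRR_{\sigma_2})$. Because $x$ lies in the open set $B$, it is automatically an interior point of $\RRR_{\sigma_1}$; thus $x\in\mathrm{int}(\RRR_{\sigma_1})\cap\mathrm{int}(\RRR_{\sigma_2})$. This intersection is open, so there is an open neighborhood $W\subset\mathrm{int}(\RRR_{\sigma_1})\cap\mathrm{int}(\RRR_{\sigma_2})$ of $x$.

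Next, I observe that by definition $\EEE=\overline{\mathrm{int}(\RRR_{\sigma_1})\cap\mathrm{int}(\RRR_{\sigma_2})}\supset W$. Since $W$ is a non-empty open subset of $\R^{k-1}$ (equivalently of $E^s$), $\EEE$ has non-empty interior. Finally, $W$ itself has strictly positive Lebesgue measure as a non-empty open set, and $\EEE\supset W$, so $\EEE$ also has strictly positive Lebesgue measure, completing the proof.

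No real obstacle is expected here, because the proposition is essentially a topological consequence of two already-established facts: that Rauzy fractals equal the closure of their interior, and that $0$ is a common point of the two fractals. The only delicate point is to be sure that $x$ can indeed be chosen in $B\cap\mathrm{int}(\RRR_{\sigma_2})$, which is immediate from $0\in\overline{\mathrm{int}(\RRR_{\sigma_2})}$ together with $B$ being an open neighborhood of $0$.
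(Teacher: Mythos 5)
Your proof is correct and follows essentially the same route as the paper's: both arguments take an open neighborhood of $0$ inside $\RRR_{\sigma_1}$, use the fact that $\RRR_{\sigma_2}$ is the closure of its interior to find an interior point of $\RRR_{\sigma_2}$ inside that neighborhood, and conclude that $\EEE$ contains a non-empty open set. The only cosmetic difference is that the paper phrases the approximation step via a sequence $x_n\to 0$ of interior points of $\RRR_{\sigma_2}$, while you select the point $x$ directly.
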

\begin{proof}
By the assumption that  $0$ is an  inner point of $\RRR_{\sigma_1}$,  
 there exists an open set ${\mathcal U}$ such that $0\in {\mathcal U} \subset \RRR_1$. 
Since the Rauzy fractal is the closure of its interior~(\cite{sirvent-wang}) 
and $0$  is a point of $\RRR_{\sigma_2}$,  there
exists a sequence of points $\{x_n\}_{n\in\N}$ in the interior of
$\RRR_{\sigma_2}$ that converges to $0$. 
Thus there exist open sets ${\mathcal V}_n$ such
that $x_n\in {\mathcal V}_n\subset \RRR_{\sigma_2}$. 
From the fact  $\{x_n\}$
converges to $0$, we conclude that, there exists $N\in\N$  such that $x_N\in {\mathcal U}$.
Therefore  the open set ${\mathcal U}\cap {\mathcal V}_N$ is non-empty and ${\mathcal U}\cap {\mathcal V}_N\subset \RRR_{\sigma_1}\cap\RRR_{\sigma_2}$. 
This implies that  $\EEE$  contains a non-empty open
set; hence it has strictly positive Lebesgue measure.
\end{proof}

%%%%%%%%%%%%%%%%%%%%%%%%%%%%%%%%%%%%%%%%%%%%%%%%%%%%%%

If the substitutions $\s_1$ and $\s_2$ satisfy the Pisot conjecture and $0$ is an inner point of one of the Rauzy fractals, then 
the set $\EEE$ is also a Rauzy fractal associated with the substitution defined by the balanced pair algorithm.
This result was proved in~\cite{sellami:2} and we give here an idea of the proof.

\smallskip

\begin{thm}
\label{thm:nonemptyinterior}
Let $\sigma_1$ and $\sigma_2$ be two  unimodular irreducible Pisot substitutions with the same incidence matrix. 
Let $\RRR_{\sigma_1}$ and $\RRR_{\sigma_2}$  be their   two
associated  Rauzy fractals.
Suppose that $0$ is an inner point of  $\RRR_{\sigma_1}$ and both substitutions satisfy the Pisot conjecture. 
We denote by $\EEE$
 the closure of the  intersection  of the interiors of $\RRR_{\sigma_1}$ and $\RRR_{\sigma_2}$. 
 Then $\EEE$ has non-empty
interior, and it is a Rauzy fractal  associated with a Pisot substitution $\Sigma$ on the alphabet of minimal balanced pairs.
\end{thm}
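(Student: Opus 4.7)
The non-empty interior of $\EEE$ follows immediately from the preceding proposition, so the substantive content is identifying $\EEE$ as a Rauzy fractal. My plan is to extract the substitution $\Sigma$ directly from the balanced pair algorithm and then match its Rauzy fractal to $\EEE$ piece by piece.

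First I would build $\Sigma$. Let $u$ and $v$ be fixed points of $\s_1$ and $\s_2$, and let $\A_\Sigma$ be the set of minimal balanced pairs that appear when the balanced pair algorithm is iterated starting from the first minimal balanced pair of $u$ and $v$. The Pisot conjecture for $\s_1$ and $\s_2$ implies pure discrete spectrum for both symbolic systems, and since they share an incidence matrix they share the target torus $\mathbb{T}^{k-1}$; this bounds the discrepancy between the projections of the prefix letter counts of $u$ and $v$ in $E^s$ and forces $\A_\Sigma$ to be finite. The rule $\binom{U}{V}\mapsto\binom{\s_1(U)}{\s_2(V)}$, followed by the canonical decomposition into minimal balanced pairs, then defines a substitution $\Sigma$ on $\A_\Sigma$. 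The forgetful map $\binom{U}{V}\mapsto {\bf l}(U)$ intertwines the incidence matrix $M_\Sigma$ with the common incidence matrix $M$ of $\s_1$ and $\s_2$, so the Perron--Frobenius eigenvalue of $M_\Sigma$ is $\lambda$ and $\Sigma$ is a Pisot substitution.

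Next I would realize the geometry. To each $\binom{U_i}{V_i}\in\A_\Sigma$ I attach a subtile $\EEE_i\subset\EEE$ consisting of the points of $\EEE$ reached along a prefix pattern labelled by $\binom{U_i}{V_i}$ in the joint stepped line. Applying $\s_1$ and $\s_2$ to such a prefix pattern and decomposing into minimal balanced pairs corresponds, on the geometric side, to a set equation of the form
\[
\lambda\,\EEE_i \;=\; \bigcup_{j}\bigl(\EEE_j + \pi({\bf l}(P_{ij}))\bigr),
\]
where the indices and translations $P_{ij}$ are exactly those prescribed by $\Sigma\bigl(\binom{U_i}{V_i}\bigr)$. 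These are precisely the defining set equations of the subtiles of $\RRR_\Sigma$, so taking the union over $i$ yields $\EEE=\bigcup_i \EEE_i=\RRR_\Sigma$.

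The main obstacle, and the point at which the Pisot conjecture hypothesis is indispensable, is showing that the pieces $\EEE_i$ are the \emph{correct} ones: that they overlap only on sets of Lebesgue measure zero and jointly exhaust $\EEE$ without leaving out any mass. The tiling property of $\RRR_{\s_1}$ and $\RRR_{\s_2}$ on the common torus, which is equivalent to the Pisot conjecture for these substitutions, guarantees that the prefix-indexed decompositions of each $\RRR_{\s_j}$ are genuine tilings rather than mere coverings; this tiling structure then transfers through the balanced pair identification to the joint system and pins down $\EEE=\RRR_\Sigma$ up to measure zero.
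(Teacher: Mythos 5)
Your argument is correct in outline and reaches the right conclusion, but it takes a genuinely different route from the paper's. The paper (following the cited reference for the details) lifts the problem to $\R^m$, where $m$ is the number of minimal balanced pairs: it takes the broken line $(L'_n)$ of the fixed point of $\Sigma$, pushes it down to $\R^k$ by the linear map $\pi'$ induced by $\binom{U}{V}\mapsto {\bf l}(U)$, observes that the points $\pi'(L'_n)$ are exactly the vertices common to the broken lines of $\s_1$ and $\s_2$, and concludes via $\pi_{\Sigma}=\pi\circ\pi'$ that the closure of the projected $\Sigma$-broken line is $\RRR_{\s_1}\cap\RRR_{\s_2}$; the self-similar structure of the intersection never appears explicitly. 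You instead stay in the contracting space, attach a subtile $\EEE_i$ to each minimal balanced pair, and identify $\EEE$ with $\RRR_{\Sigma}$ through the graph-directed set equations. Your route buys a more intrinsic explanation of why $\EEE$ is a Rauzy fractal (it is the attractor of the same graph-directed system, using the characterization of Rauzy fractals as such attractors), while the paper's route makes the letter-by-letter correspondence between $\Sigma$ and the common vertices transparent and defines $\RRR_{\Sigma}$ directly as a closure, with no need to verify any set equation. Both sketches delegate the same two hard points to the Pisot-conjecture hypothesis --- finiteness of the alphabet of minimal balanced pairs, and the fact that the common vertices are dense in the intersection rather than merely contained in it --- and you locate these correctly. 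Two imprecisions in your version are worth repairing, though neither is structural. First, the set equation should not carry the scalar $\lambda$: for $k\geq 3$ the restriction of $M$ to $E^s$ is not a similarity, and since $\Sigma$ is in general reducible the relevant contraction is $M|_{E^s}$ for the original $k\times k$ matrix $M$ acting through the identification $E^s_0=E^s$, so the correct form is $\EEE_j=\bigcup_i\bigl(M\,\EEE_i+\pi({\bf l}(P_{ij}))\bigr)$. Second, the intertwining $T M_{\Sigma}=M T$ only shows that $\lambda$ is an eigenvalue of $M_{\Sigma}$; that it is the dominant one additionally uses the growth bound $|\Sigma^n\binom{U}{V}|\leq |\s_1^n(U)|$, which is the content of the lemma the paper cites for the Pisot property of $\Sigma$.
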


%The following lemma is required in the proof of Theorem~\ref{thm:nonemptyinterior}.
We will assume the following lemma (for the proof see~\cite{sellami:2}), and we give an idea of the proof of Theorem~\ref{thm:nonemptyinterior}.

\begin{lemma}\label{lemma}
Let $\sigma_1$ and $\sigma_2$ be two  unimodular irreducible Pisot substitutions
with the same incidence matrix. Let $\RRR_{\sigma_1}$ and $\RRR_{\sigma_2}$  be their  
associated  Rauzy fractals.
Suppose that ${\sigma_1}$ satisfies the Pisot conjecture and $0$ is an inner point of  $\RRR_{\sigma_1}$.
Let $u$ and $v$ be the one-sided fixed points of $\sigma_1$ and $\sigma_2$ respectively. There exists a finite non-empty set $E$ of minimal balanced pairs, $E = \left\{\begin{pmatrix}U_1\\V_1\end{pmatrix},\ldots, \begin{pmatrix}U_p\\V_p\end{pmatrix}\right\}$, such that 
 the double sequence $\begin{pmatrix}u\\v \end{pmatrix}$ can be decomposed  with elements from $E$.
\end{lemma}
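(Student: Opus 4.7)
My plan is to recast balanced-pair positions as coincidences of the two projected polygonal lines in $E^s$, use the Pisot conjecture on $\sigma_1$ together with the hypothesis $0\in\mathrm{int}\,\RRR_{\sigma_1}$ to produce those coincidences, and then exploit the substitutive structure to see that only finitely many minimal balanced pairs can occur.

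The dictionary is set up as follows. Let $L_n^{(1)}$ and $L_n^{(2)}$ be the polygonal lines associated with the fixed points $u$ and $v$, and put $d_n=L_n^{(1)}-L_n^{(2)}\in\Z^k$. By construction $d_n$ has zero coordinate sum. Since $\sigma_1$ is irreducible Pisot, $E^u$ is one-dimensional and spanned by a strictly positive Perron eigenvector, so the only element of $E^u$ with zero coordinate sum is $0$. Consequently $\pi$ is injective on the sublattice $\Lambda=\{x\in\Z^k:\sum_i x_i=0\}$, which gives
\[
\pi(d_n)=0 \iff d_n=0 \iff \mathbf{l}(u_0\cdots u_n)=\mathbf{l}(v_0\cdots v_n),
\]
so the balanced prefixes of $(u,v)$ correspond exactly to the return times to $0$ of the walk $\pi(d_n)$ in $E^s$, and the minimal balanced pairs $\beta_1,\beta_2,\dots$ decomposing $(u,v)$ correspond exactly to its successive first-return excursions. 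Moreover $\pi(d_n)\in\RRR_{\sigma_1}-\RRR_{\sigma_2}$ is bounded, so by injectivity of $\pi$ on $\Lambda$ the sequence $d_n$ itself ranges in a \emph{finite} subset $F\subset\Lambda$.

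Existence of the first $\beta_1$ follows by combining the Pisot conjecture for $\sigma_1$ with the hypothesis that $0$ is interior to $\RRR_{\sigma_1}$. Equidistribution of $\{\pi(L_n^{(1)})\}$ on $\RRR_{\sigma_1}$ implies that any open neighborhood $\mathcal{U}$ of $0$ is visited with positive frequency, and density of $\{\pi(L_n^{(2)})\}$ in $\RRR_{\sigma_2}$ provides simultaneous returns of the second orbit near $0$. Because $\pi(d_n)$ takes values in the discrete image $\pi(F)$, shrinking $\mathcal{U}$ forces any simultaneous near-$0$ visit to satisfy $\pi(d_n)=0$, hence $d_n=0$; taking the smallest such $n\geq 1$ produces $\beta_1$, and iterating on the shifted tails yields a decomposition $(u,v)=\beta_1\beta_2\beta_3\cdots$.

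The main step, and where I expect the real obstacle, is to show that $\{\beta_i:i\geq 1\}$ is finite. Each $\beta_i$ is encoded by the shape of a first-return excursion of $d_n$ inside the finite set $F$, but finiteness of $F$ alone does not bound excursion lengths since a walk can revisit non-zero states of $F$ arbitrarily often. The extra ingredient is the substitutive hierarchy: applying $(\sigma_1,\sigma_2)$ to any minimal balanced pair $(U,V)$ appearing in the decomposition yields a longer balanced pair $(\sigma_1(U),\sigma_2(V))$ that re-decomposes into minimal balanced pairs from the same collection, which induces a substitution-like rule on the alphabet of $\beta_i$ encountered so far. Coupling this rule with the Pisot contraction of $M$ on $E^s$ and with the discreteness of $F$ is what pins the excursion shapes down to a finite set, so the alphabet closes up after finitely many steps. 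Making this coupling precise is the technical core carried out in~\cite{sellami:2}.
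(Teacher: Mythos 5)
Your reduction of the problem to coincidences of the two stepped lines is sound: writing $d_n=L_n^{(1)}-L_n^{(2)}$, noting that $d_n$ lies in the hyperplane lattice $\Lambda=\{x\in\Z^k:\sum_ix_i=0\}$, that $E^u\cap\Lambda=\{0\}$ by positivity of the Perron eigenvector, and that $\pi|_\Lambda$ is therefore a lattice isomorphism onto a discrete subgroup of $E^s$ (so that boundedness of $\pi(d_n)$ forces $d_n$ to range in a finite set $F$) is correct and is exactly the framework underlying the paper's discussion. Note, however, that the paper itself does not prove this lemma: it states ``we will assume the following lemma (for the proof see~\cite{sellami:2})'', so there is no internal argument to compare against, and your attempt must be judged as a standalone proof. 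Judged that way, it has two genuine gaps.

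First, the existence of the initial balanced pair is not established by the argument you give. Density of $\{\pi(L_n^{(2)})\}$ in $\RRR_{\sigma_2}$ and positive-frequency visits of $\{\pi(L_n^{(1)})\}$ to a neighborhood of $0$ are statements about the two orbits separately; they do not produce a single index $n$ at which \emph{both} projections are simultaneously near $0$, which is what you need to conclude $\pi(d_n)=0$ by discreteness. The actual mechanism (Lemma 4.2 of~\cite{sellami:2}) runs through the tiling/measure-disjointness of the lattice translates of the Rauzy fractal supplied by the Pisot conjecture, applied to the finitely many translates $\RRR_{\sigma_2}+\pi(c)$, $c\in F$, that cover the first orbit: the interior-point hypothesis then forces the value $c=0$ to be attained. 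Second, and more importantly, the finiteness of the set of minimal balanced pairs --- which is the entire content of the lemma --- is not proved. You correctly observe that finiteness of $F$ does not bound the excursion lengths, but the paragraph that is supposed to close this gap (``coupling this rule with the Pisot contraction \dots is what pins the excursion shapes down'') is a description of where an argument should go, explicitly deferred to~\cite{sellami:2}, not an argument. So the proposal is an accurate road map consistent with the cited source, but it does not constitute a proof of the statement.
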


 The intersection set can be obtained as the projection of a fixed point of a new substitution
 defined on the set of minimal balanced pairs. 
Let  $u$ and $v$ be the elements of $\A^{\N}$, such that $\s_1(u)=u$ and $\s_2(v)=v$. 
From the hypotheses that $0$ is an inner point of $\RRR_{\s_1}$ and $\s_1$ satisfies the Pisot conjecture,
 follows that there exist $W_1$ and $W_2$ prefixes of $u$ and $v$ respectively, 
 such that ${\bf l}(W_1)$ and ${\bf l}(W_2)$, i.e.,
 the pair  $\begin{pmatrix}W_1\\W_2\end{pmatrix}$ is a balanced pair, see Lemma 4.2 of~\cite{sellami:2}.
We decompose this balanced pair into minimal balanced pairs. 
We repeat this procedure to each new minimal balanced pair. 
By Lemma~\ref{lemma} the set of minimal balanced pair is finite.
This  follows from the fact the set of common return times is bounded, 
by iteration with $\sigma_1$ and $\sigma_2$, so
we obtain in bounded finite  time the set of all minimal balanced pairs. 

\smallskip

We take the image of each element of the finite  set of minimal balanced pairs.  
The substitution $\Sigma$ is defined as $\Sigma : \begin{pmatrix}U \\  V  \end{pmatrix} \longmapsto \begin{pmatrix}\sigma_1(U)\\ \sigma_2(V)\end{pmatrix}$. The balanced pair $\begin{pmatrix}\sigma_1(U)\\ \sigma_2(V)\end{pmatrix}$ can be decomposed into minimal balanced pairs, and we can write the image of each minimal balanced pair with concatenated minimal balanced pairs. 
So $\Sigma$ is a substitution defined on the set of minimal balanced pairs.
This substitution is Pisot, with the same dominant eigenvalue as $\s_1$~({\em cf.} Lemma 4.5 of~\cite{sellami:2}), however in general it is reducible.
Let $m$ be the number of different minimal balanced pairs, clearly $m\geq k$.
We consider the decomposition of the $m$-dimensional Euclidean space, in the corresponding expanding, contracting  and complementary spaces:
$$
\R^m=E^s_0\oplus E^u_0 \oplus E^c_0.
$$
Let $E^s$ and $E^u$ be the contracting and expanding eigen-spaces corresponding to $\s_1$, since it is irreducible, we have $\R^k=E^u\oplus E^u$.
The substitutions $\Sigma$ and $\s$ have the same dominant eigenvalue, therefore 
$E^s_0=E^s$ and $E^u_0=E^u$.  
Let $\pi:\R^k\ra E^s$ be the projection of $\R^k$   onto $E^s$; and
 $\pi_{\Sigma}:\R^m\ra E^s_0$ be the projection of $\R^m$   onto $E^s_0$. 
 If $\pi':\R^m\ra \R^k$ is the projection of $\R^m$ onto $\R^k$, then $\pi_{\Sigma}=\pi\circ\pi'$.
 
 Let $(L'_n)_{n\in\N}$ be the broken line in $\R^m$ corresponding to a fixed point of the substitution $\Sigma$. 
 And let $(L_n)_{n\in\N}$ be the broken line in $\R^k$ corresponding to the fixed point of $\s_1$.
The points $\pi'(L'_n)$  corresponds to  exactly the common points to the broken lines of $\s_1$ and $\s_2$~({\em cf.}~\cite{sellami:2}). 
So for all $n\geq 0$, there exists $n'\in\N$, such that $\pi'(L'_n)=L_{n'}$, and moreover 
if the point $L_l$ is a point common to the broken lines of $\s_1$ and $\s_2$, then there exists $n_l$ such that $\pi'(L_{n_l})=L_l$.
Hence
$$
\overline{\left\{\pi_{\Sigma}(L'_n)\,:\, n\in\N\right\}} = \RRR_{\s_1}\cap\RRR_{\s_2}.
$$

\smallskip

When we use Theorem~\ref{thm:nonemptyinterior} in the case of the substitutions $\s$ and $\hat{\s}$,  we obtain the following result:

\begin{thm}\label{thm:main}
Let $\s$ be a unimodular irreducible Pisot substitution 
and  $\hat{\s}$ its reverse substitution. 
Let $\RRR_{\s}$ and $\RRR_{\hat{\s}}$ be the respective Rauzy fractals.
We suppose that the substitution $\s$ satisfies the Pisot conjecture
and the origin is an inner point of $\RRR_{\s}$.
Then the set $\RRR_{\s}\cap\RRR_{\hat{\s}}$ has non-empty interior and is a Rauzy fractal
associated with the substitution obtained by balanced pair algorithm of $\s$ and $\hat{\s}$.
\end{thm}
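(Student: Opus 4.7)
The plan is to apply Theorem~\ref{thm:nonemptyinterior} directly with $\sigma_1 := \s$ and $\sigma_2 := \hat{\s}$, so the task reduces to verifying its hypotheses in the present setting. These hypotheses are: both substitutions are unimodular irreducible Pisot with the same incidence matrix, both satisfy the Pisot conjecture, and the origin is an inner point of one of their Rauzy fractals.

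First, I would observe that $\s$ and $\hat{\s}$ share the same incidence matrix (recorded just before Corollary~\ref{cor:symmetricfractal}), since reversing the image word of each letter leaves its Parikh vector $\bs{l}(\s(i))$ unchanged. Consequently $\hat{\s}$ is unimodular irreducible Pisot whenever $\s$ is, so the algebraic hypotheses are automatic. The requirement that $0$ be an inner point of $\RRR_{\hat{\s}}$ follows from Corollary~\ref{cor:symmetricfractal}: the identity $\RRR_{\hat{\s}} = -\RRR_{\s}$ takes interiors to interiors and fixes the origin, so $0 \in \mathrm{int}\,\RRR_{\s}$ forces $0 \in \mathrm{int}\,\RRR_{\hat{\s}}$.

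The main obstacle is that Theorem~\ref{thm:nonemptyinterior} requires both substitutions to satisfy the Pisot conjecture, while Theorem~\ref{thm:main} assumes it only for $\s$. I would handle this by producing a flip conjugacy between the two substitutive systems. Proposition~\ref{prop:FP} supplies a fixed point $\hat{u}$ of $\hat{\s}$ with $\hat{u}_i = u_{-i-1}$, and the reversal map $\varphi\colon \A^{\Z} \ra \A^{\Z}$ defined by $\varphi(w)_i = w_{-i-1}$ is a homeomorphism that carries $\Omega_u$ onto $\Omega_{\hat{u}}$ and intertwines the shift $S$ on $\Omega_u$ with $S^{-1}$ on $\Omega_{\hat{u}}$. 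Consequently, if $(\Omega_u, S)$ is measurably conjugate to a translation $T_\alpha$ on a $(k-1)$-dimensional torus, then $(\Omega_{\hat{u}}, S)$ is measurably conjugate to $T_\alpha^{-1} = T_{-\alpha}$, which is again a torus translation; hence $\hat{\s}$ also satisfies the Pisot conjecture.

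With all four hypotheses confirmed, Theorem~\ref{thm:nonemptyinterior} yields that $\EEE$, the closure of the intersection of the interiors of $\RRR_{\s}$ and $\RRR_{\hat{\s}}$, is a Rauzy fractal associated with a Pisot substitution $\Sigma$ on the alphabet of minimal balanced pairs. Inspecting the construction recalled immediately after Theorem~\ref{thm:nonemptyinterior}, $\Sigma$ is precisely the substitution produced by the balanced pair algorithm of section~\ref{sec:bpa} applied to $\s$ and $\hat{\s}$. Finally, since each Rauzy fractal equals the closure of its interior, $\EEE$ and $\RRR_{\s} \cap \RRR_{\hat{\s}}$ differ at most on a boundary of measure zero; in particular $\RRR_{\s} \cap \RRR_{\hat{\s}}$ has non-empty interior, completing the proof.
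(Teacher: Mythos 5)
Your proposal is correct and takes essentially the same route as the paper, which states Theorem~\ref{thm:main} as an immediate application of Theorem~\ref{thm:nonemptyinterior} to $\sigma_1=\s$, $\sigma_2=\hat{\s}$ with no further argument. Your additional step --- transferring the Pisot conjecture from $\s$ to $\hat{\s}$ via the reversal homeomorphism that conjugates $S$ to $S^{-1}$ and sends a torus translation to its inverse --- correctly fills the hypothesis mismatch (Theorem~\ref{thm:nonemptyinterior} asks that \emph{both} substitutions satisfy the conjecture) that the paper leaves implicit.
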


%%%%%%%%%%%%%%%%%%%%%%%%%%%%%%%%%%%%%%%%%%%%%%%%%%%%%%
\section{Examples}\label{sec:examples}
In this section we use letters to represent the elements of the alphabet $\A$.

\smallskip

\noindent 
\textbf{Example 1:}\\
We consider the two substitutions $\s_1$ and $\s_2$ defined as:

\begin{center}
$\s_1:\left\{
\begin{array}{ll}
a\rightarrow aba\\
b\rightarrow ab
\end{array}\right.$
\hspace{1cm} and \hspace{1cm} $\s_2:\left\{
\begin{array}{ll}
a\rightarrow aba\\
b\rightarrow ba.
\end{array}\right.$
\end{center}

The Rauzy fractal of $\s_1$ is an interval, so by  Corollary~\ref{cor:symmetricfractal} the Rauzy fractal of $\s_2$ is also an interval.

\smallskip

We  describe the balanced pair algorithm and we obtain a morphism that characterize the common points of these two Rauzy fractals. 
In this example, the first minimal balanced pair that we can consider is the beginning of the two fixed points associated with $\s_1$ and $\s_2$ it will be 
$\begin{pmatrix}
a\\
a\\
\end{pmatrix}$.
We represent the image of the first element of this pair by $\s_1$ and
the second one by $\s_2$. We obtain : 
$\begin{pmatrix}
a\\
a\\
\end{pmatrix}\overset{\s_1, \s_2}\longrightarrow\begin{pmatrix}
aba\\
aba\\
\end{pmatrix}$.
We denote by $A$ the minimal  balanced  pair $\begin{pmatrix}
a\\
a\\
\end{pmatrix}$ and by $B$ the minimal balanced pair $\begin{pmatrix}
b\\
b\\
\end{pmatrix}$.
Hence we  obtain $A\rightarrow ABA.$

\smallskip

The second step is to consider the same process  with the new balanced pair
$B = \begin{pmatrix}
b\\
b\\
\end{pmatrix}$.
We consider the image of this balanced pair with the two substitutions $\s_1$ and
$\s_2$, and we obtain:
\begin{center}
$\begin{pmatrix}
b\\
b\\
\end{pmatrix}\overset{\s_1, \s_2}\longrightarrow\begin{pmatrix}
ab\\
ba\\
\end{pmatrix}$.\\
\end{center}
We obtain an other balanced pair $\begin{pmatrix}
ab\\
ba\\
\end{pmatrix}$
and we denote by $C$  this new balanced pair. We get the image
of $B$ which is $C$.
We continue with this algorithm and we obtain the image of the balanced pair
$\begin{pmatrix}
ab\\
ba\\
\end{pmatrix}$ is the new balanced pair $\begin{pmatrix}
abaab\\
baaba\\
\end{pmatrix}$.
Therefore we obtain that  the image of the letter $C$ is $CAC$.

On total, we  obtain an alphabet  $\mathcal{B}$ on $3$ symbols and we can define the morphism $\Sigma$ as :

\begin{center}
$\Sigma :\left\{
\begin{array}{ll}
A\rightarrow ABA,\\
B\rightarrow C,\\
C\rightarrow CAC.\\
\end{array}\right. $
\end{center}
This morphism $\Sigma$ is the substitution obtained in Theorem~\ref{thm:nonemptyinterior}. 
The characteristic polynomial of the transition matrix of   $\Sigma$ is $(x^2-3x+1)(x-1)$.
The substitution $\Sigma$ generates all the common points of the two Rauzy fractals
associated with  $\s_1$ and $\s_2$.

%\section{Family of Pisot substitutions}
\medskip

\noindent 
\textbf{Example 2:}\\
In this example we consider the family of Pisot substitutions defined as follows: 

\begin{center}
$\sigma_{1,i}:\left\{
\begin{array}{ll}
a\rightarrow a^ib,\\
b\rightarrow a^ic,\\
c\rightarrow a,
\end{array}\right.$
\hspace{1cm} and \hspace{1cm} $\sigma_{2,i}:\left\{
\begin{array}{ll}
a\rightarrow ba^i,\\
b\rightarrow ca^i,\\
c\rightarrow a.
\end{array}\right.$
\end{center}

\begin{figure}[h]
\begin{center}
%\label{compare_fractals}
\scalebox{0.3}{\includegraphics{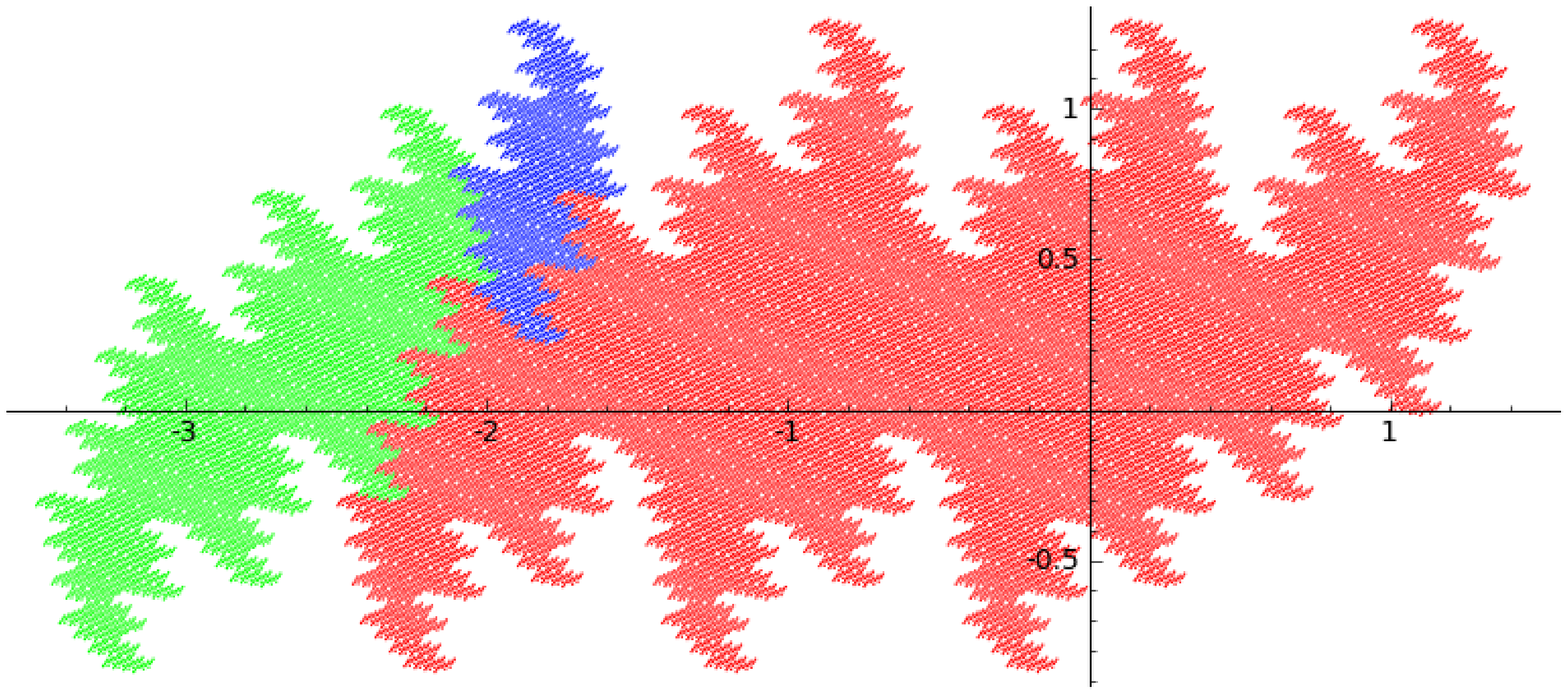}}
\vspace{0.5cm}
\scalebox{0.3}{\includegraphics{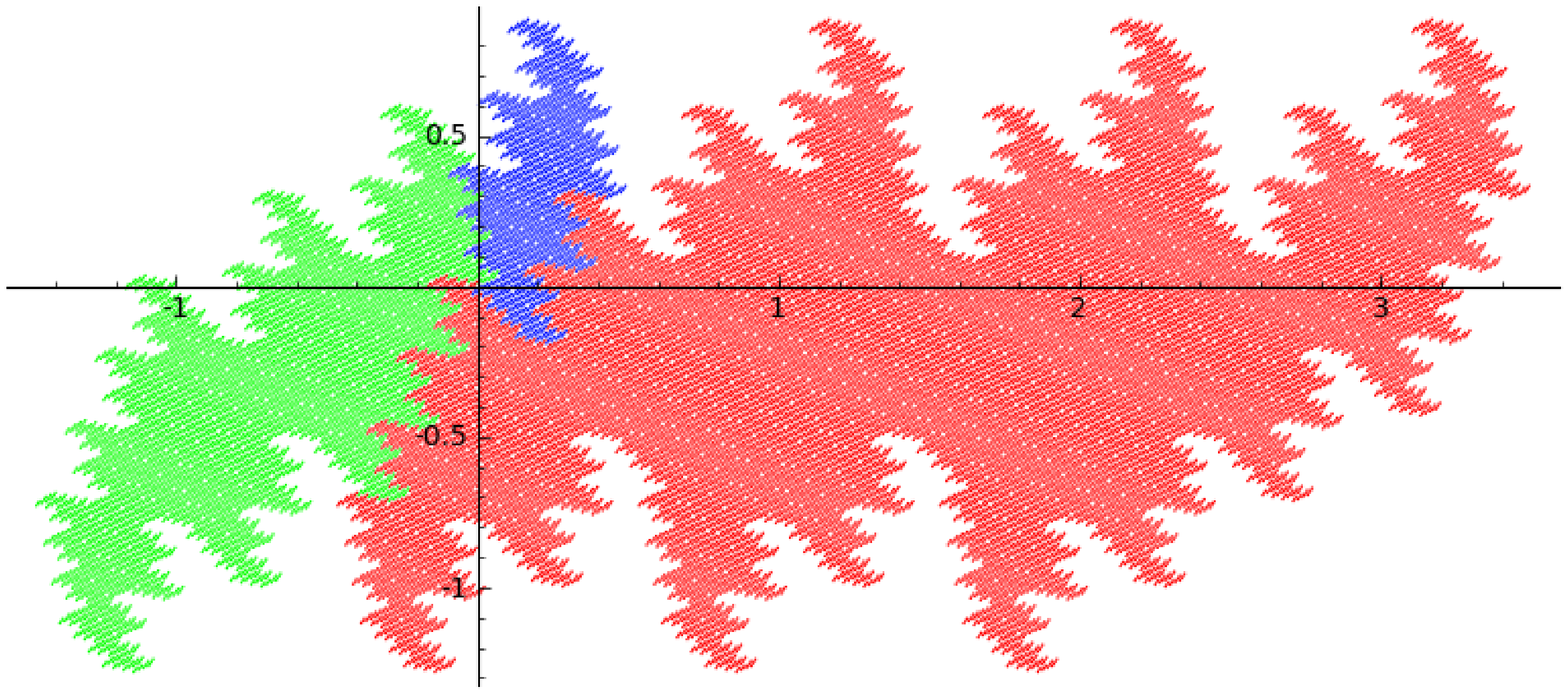}}
\caption{ Rauzy fractals associated with $\sigma_{1,3}$, $\sigma_{2, 3}$. \label{fig:ex2-fractals}}
\end{center}
\end{figure}

Some geometrical and dynamical properties of 
the Rauzy fractals of this family of substitutions have been studied in~\cite{LMST,thuswaldner}.
In particular
the symmetry of these Rauzy fractals  been studied in~\cite{sirvent:2}, 
further properties of these Rauzy fractals were studied in~\cite{NSS}.
They are symmetric, but their center of symmetry is not the origin.
The Rauzy fractals for $\s_{1,3}$ and $\s_{2,3}$ are shown in Figure~\ref{fig:ex2-fractals}.
The classical tribonacci substitution, is $\s_{1,1}$.
The Rauzy fractals of $\s_{1,1}$, $\s_{2,1}$ and their intersections are shown in Figure~\ref{fig:tribo}.  

Since $\s_{2,i}$ is the reverse substitution of $\s_{1,i}$,  both substitutions have the same incidence matrix:
$$M_i = \begin{pmatrix}
i & i & 1 \\
1 & 0 &0 \\
0 & 1 & 0 \\
\end{pmatrix}.
$$  
Let $P_i(x) = x^3 - ix^2- ix-1$ be the characteristic polynomial of $M_i$. The substitutions $\sigma_{1,i}$ and $\sigma_{2,i}$ are unimodular irreducible Pisot substitutions,~({\em cf.}~\cite{brauer}).
 We are interested in this section to study the substitution associated with the intersection. 
In the following proposition we prove that intersection substitution  is defined in an alphabet of six symbols for all $i\geq 1$.

\begin{proposition}
The intersection substitution $\Sigma_i$ associated with  $\sigma_{1,i}$ and $\sigma_{2,i}$ is defined on an alphabet of six symbols as follows:

\begin{center}
$\Sigma_i:\left\{
\begin{array}{ll}
A\rightarrow B,\\
B\rightarrow C,\\
C\rightarrow [(AD)^iAE]^i(AD)^iA,\\
D\rightarrow F,\\
E\rightarrow (AD)^{i-1}A,\\
F\rightarrow [(AD)^iAE]^{i-1}(AD)^iA.
\end{array}\right.$
\end{center}

\end{proposition}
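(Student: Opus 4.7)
The plan is to run the balanced pair algorithm of $\sigma_{1,i}$ and $\sigma_{2,i}$ starting from the pair $A=\binom{a}{a}$, list every minimal balanced pair that appears, and verify that the resulting alphabet closes on exactly six letters with the claimed substitution rules. First I set up the six candidate pairs
\[
A = \binom{a}{a},\ B = \binom{a^ib}{ba^i},\ C = \binom{(a^ib)^ia^ic}{ca^i(ba^i)^i},\ D = \binom{a^{i-1}b}{ba^{i-1}},\ E = \binom{a^{i-1}c}{ca^{i-1}},\ F = \binom{(a^ib)^{i-1}a^ic}{ca^i(ba^i)^{i-1}},
\]
with $D$ and $E$ degenerating to $\binom{b}{b}$ and $\binom{c}{c}$ when $i=1$. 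Each is a minimal balanced pair: for $B$ and $D$ the top begins with $a$ and the bottom with $b$, and the lone $b$ in the top sits at the very end, so no proper prefix can balance; the argument for $C$ and $F$ is the same but with the isolated $c$.

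The images of $A, B, D$ are computed letter-by-letter. Using $\sigma_{1,i}(a)=a^ib$, $\sigma_{1,i}(b)=a^ic$, $\sigma_{1,i}(c)=a$ and the mirror for $\sigma_{2,i}$, one gets $\Sigma_i(A)=B$, $\Sigma_i(B)=\binom{(a^ib)^ia^ic}{ca^i(ba^i)^i}=C$, and $\Sigma_i(D)=\binom{(a^ib)^{i-1}a^ic}{ca^i(ba^i)^{i-1}}=F$; since $C$ and $F$ are already minimal, the rules $A\to B$, $B\to C$, $D\to F$ follow.

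The heart of the proof is the algebraic identity, proved by induction on $k\geq 1$,
\[
(aba^{i-1})^k = a(ba^i)^{k-1}ba^{i-1}, \quad\text{hence}\quad (aba^{i-1})^k\,a = a(ba^i)^k.
\]
This is the bridge between the ``top-type'' words from $\sigma_{1,i}$ (in which each $b$ is preceded by $a^i$) and the ``bottom-type'' words from $\sigma_{2,i}$ (in which each $b$ is followed by $a^i$). Applying it, $\Sigma_i(E)=\binom{(a^ib)^{i-1}a}{a(ba^i)^{i-1}}$ decomposes as $(AD)^{i-1}A$: the top of $(AD)^{i-1}A$ is $(a\cdot a^{i-1}b)^{i-1}a = (a^ib)^{i-1}a$ and its bottom is $(a\cdot ba^{i-1})^{i-1}a = a(ba^i)^{i-1}$. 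Similarly $(AD)^iAE$ has top $(a^ib)^ia^ic$ and bottom $a(ba^i)^ica^{i-1}$, while $(AD)^iA$ has top $(a^ib)^ia$ and bottom $a(ba^i)^i$. Expanding
\[
\Sigma_i(C) = \binom{((a^ib)^ia^ic)^i(a^ib)^ia}{a(ba^i)^i(ca^i(ba^i)^i)^i}
\]
and concatenating $i$ copies of $(AD)^iAE$ with a trailing $(AD)^iA$ — the collapse $a^{i-1}\cdot a = a^i$ at each join between consecutive bottom blocks absorbing the trailing $ca^{i-1}$ into the pattern $ca^i(ba^i)^i$ — reproduces both rows of $\Sigma_i(C)$ exactly. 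The same argument with $i-1$ leading blocks yields $\Sigma_i(F)=[(AD)^iAE]^{i-1}(AD)^iA$.

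Closure is immediate: every right-hand side only involves the letters $A,B,C,D,E,F$, so the alphabet is $\Sigma_i$-invariant and the balanced pair algorithm terminates on these six symbols. The main obstacle is the final step: verifying position by position that the superficially very different top and bottom of $\Sigma_i(C)$ (and of $\Sigma_i(F)$) split at exactly the same breakpoints in both rows, so that the decomposition into $A,D,E$ blocks is consistent; the displayed identity is what makes this alignment go through.
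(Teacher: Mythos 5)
Your proof is correct and follows essentially the same route as the paper: run the balanced pair algorithm from $\binom{a}{a}$, identify the six minimal pairs $A,\dots,F$, and decompose the images, with your commutation identity $(aba^{i-1})^k a = a(ba^i)^k$ simply making explicit the row alignment that the paper only displays for $\Sigma_i(C)$. Note that your exponents for $\Sigma_i(E)$ and $\Sigma_i(F)$, namely $(AD)^{i-1}A$ and $[(AD)^iAE]^{i-1}(AD)^iA$, agree with the statement and are the correct ones; the paper's own concluding sentence misprints both with exponent $i$.
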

\begin{proof}
We apply the balanced pair algorithm to $\sigma_{1,i}$ and $\sigma_{2,i}$. The first minimal balanced pair is $A = \begin{pmatrix}
a\\
a\\
\end{pmatrix}$. We take the image of $A$ with $\sigma_{1,i}$ and $\sigma_{2,i}$ we obtain a new balanced pair $B =  \begin{pmatrix}
a^i b\\
ba^i\\
\end{pmatrix}$. The balanced pair $B$ is a minimal balanced pair. So we obtain $A\longrightarrow B$.  We take its image again,  we obtain:

$$\Biggl(\begin{matrix}
a^ib\\
ba^i\\
\end{matrix}\Biggr)\overset{\sigma_{1,i}, \sigma_{2,i}}\longrightarrow \Biggl(\begin{matrix}
(a^ib)^ia^ic\\
ca^i(ba^i)^i\\
\end{matrix}\Biggr).$$

\smallskip

We denote by $C = \Biggl(\begin{matrix}
(a^ib)^ia^ic\\
ca^i(ba^i)^i\\
\end{matrix}\Biggr)$, the new balanced pair, it is clear that $C$ is a minimal balanced pair and $B\longrightarrow C$. We continues with the algorithm we calculate the image of $C$, we obtain: 

$$\Biggl(\begin{matrix}
(a^ib)^ia^ic\\
ca^i(ba^i)^i\\
\end{matrix}\Biggr)  \overset{\sigma_{1,i}, \sigma_{2,i}}\longrightarrow \Biggl
(\begin{matrix}
[(a^ib)^ia^ic]^i(a^ib)^ia\\
a(ba^i)^i[ca^i(ba^i)^i]^i\\
\end{matrix}\Biggr). 
$$
Note that the right hand side term can be written as
$$
 \Biggl(\begin{matrix}
[(a^ib) \ldots (a^ib) a^ic]\ldots [(a^ib) \ldots (a^ib) a^ic](a^ib) \ldots (a^ib)a \\
a(ba^)i\ldots (ba^i)[ca^i(ba^i)\ldots (ba^i)]\ldots [ca^i(ba^i)\ldots (ba^i)]\\
\end{matrix}\Biggr).  $$
We can decompose the new balanced pair as follows: 
$$\Biggl[\Biggl(\begin{matrix}
a\\
a\\
\end{matrix}\Biggr)  \Biggl(\begin{matrix}
a^{i-1}b\\
ba^{i-1}\\
\end{matrix}\Biggr)\ldots \Biggl(\begin{matrix}
a\\
a\\
\end{matrix}\Biggr)  \Biggl(\begin{matrix}
a^{i-1}b\\
ba^{i-1}\\
\end{matrix}\Biggr) \Biggl(\begin{matrix}
a\\
a\\
\end{matrix}\Biggr)  \Biggl(\begin{matrix}
a^{i-1}c\\
ca^{i-1}\\
\end{matrix}\Biggr)\Biggr]^i \Biggl[\Biggl(\begin{matrix}
a\\
a\\
\end{matrix}\Biggr)  \Biggl(\begin{matrix}
a^{i-1}b\\
ba^{i-1}\\
\end{matrix}\Biggr)\Biggr]^i.$$
So we obtain two new minimals balanced pairs, we denote $D =  \Biggl(\begin{matrix}
a^{i-1}b\\
ba^{i-1}\\
\end{matrix}\Biggr)$ and $E =  \Biggl(\begin{matrix}
a^{i-1}c\\
ca^{i-1}\\
\end{matrix}\Biggr).$ The image of C is  $  [(AD)^iAE]^i(AD)^iA$. We applies the balanced pair algorithm to $D$ we obtain a new balanced pair $F =  \Biggl(\begin{matrix}
(a^ib)^{i-1}a^ic\\
ca^i(ba^i)^{i-1}\\
\end{matrix}\Biggr)$. Again  $F$ is a minimal balanced pair. 

We continue with minimal balanced pair $E$, we obtain $E\longrightarrow (AD)^iA$ and finally $F\longrightarrow [(AD)^iAE]^i(AD)^iA$.
\end{proof}

\begin{figure}
\begin{center}
%\label{compare_fractals}
%\scalebox{0.3}{\includegraphics{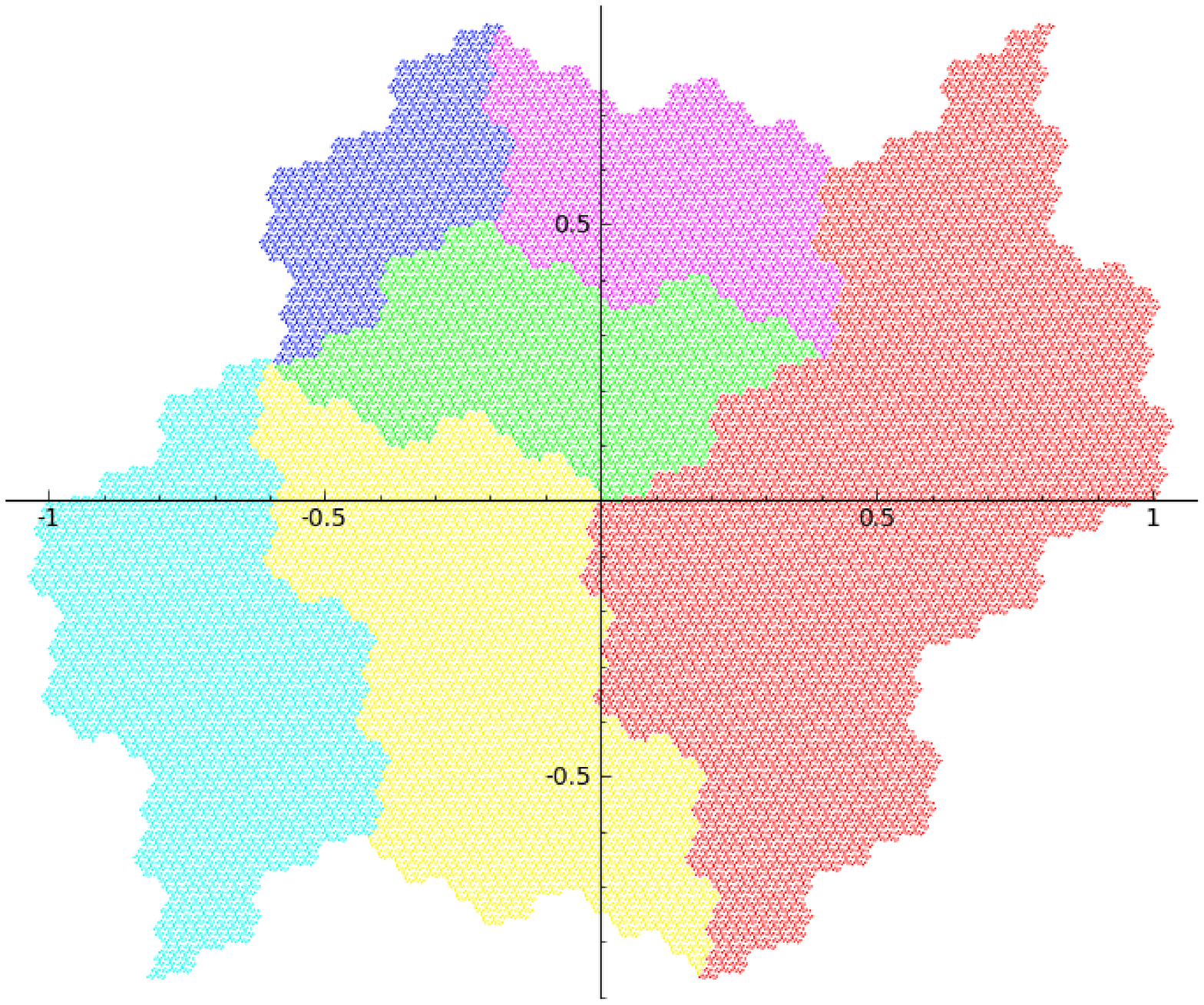}}
%\vspace{0.5cm}
\scalebox{0.25}{\includegraphics{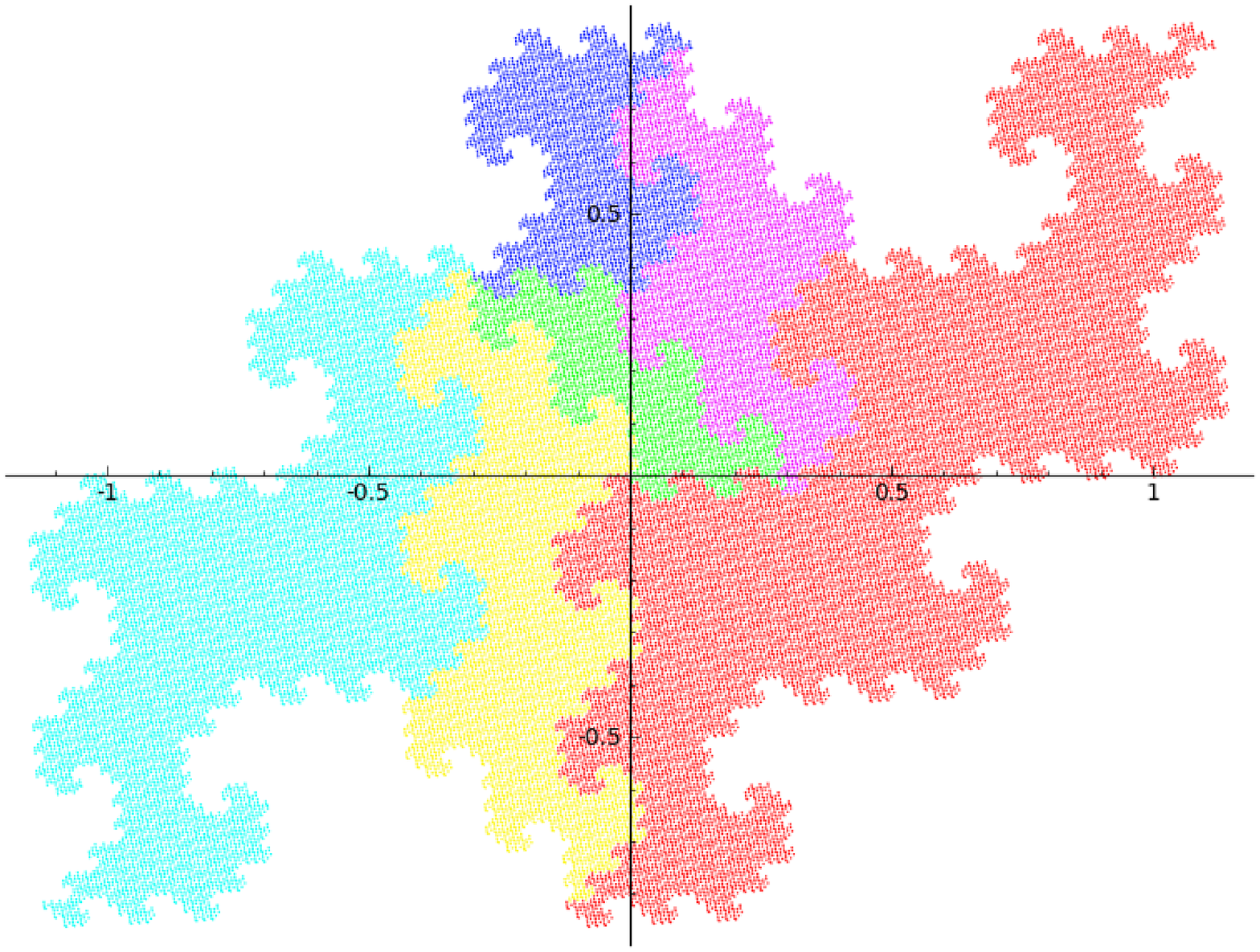}}
\vspace{0.2cm}
%\hspace{0.2cm}
\scalebox{0.25}{\includegraphics{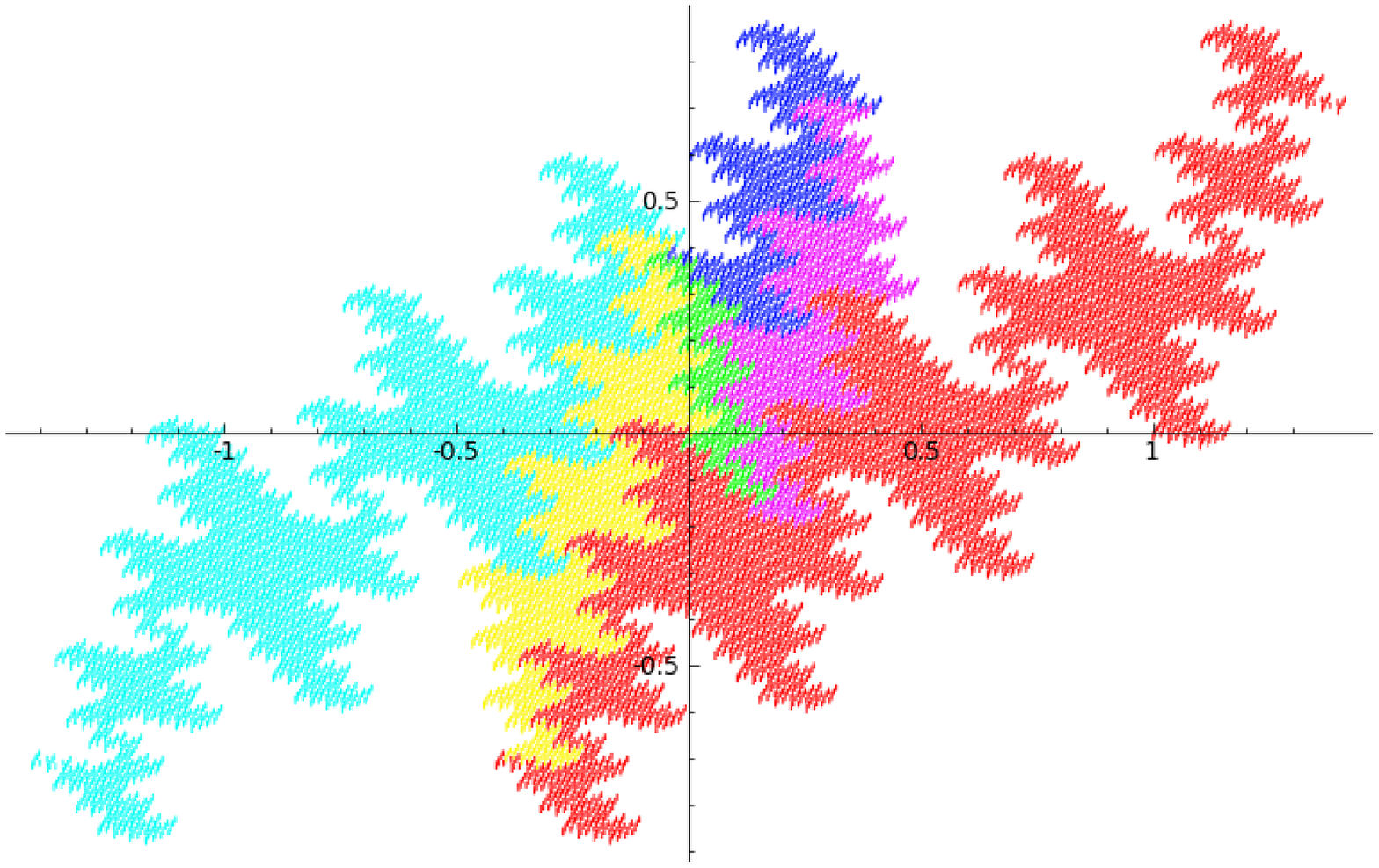}}
\caption{\label{fig:intersection-ex2} Rauzy fractals intersection $\Sigma_{2}$ and $\Sigma_{3}$. }
\end{center}
\end{figure}

The characteristic polynomial of $\Sigma$ is 
$$P_{\Sigma_i}(x) = (x^3 - ix^2- ix-1)(x^3+ix^2+ix-1).$$
Figures~\ref{fig:intersection-ex2} and~\ref{fig:tribo} show the intersection sets for the first three substitutions of this family.\\

\begin{figure}
\begin{center}
%\label{compare_fractals}
\scalebox{0.2}{\includegraphics{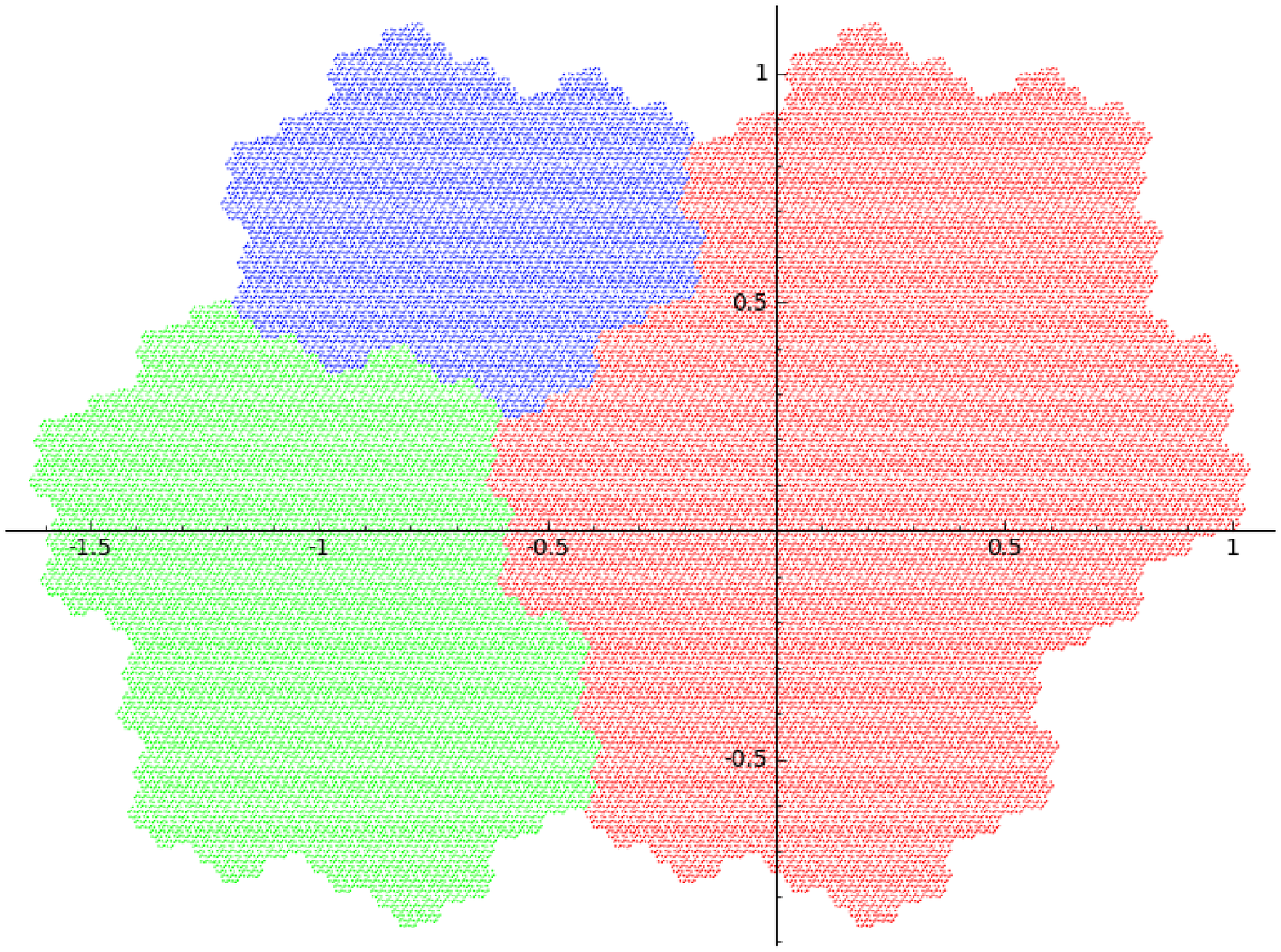}}
%\vspace{0.5cm}
\hspace{0.2cm}
\scalebox{0.2}{\includegraphics{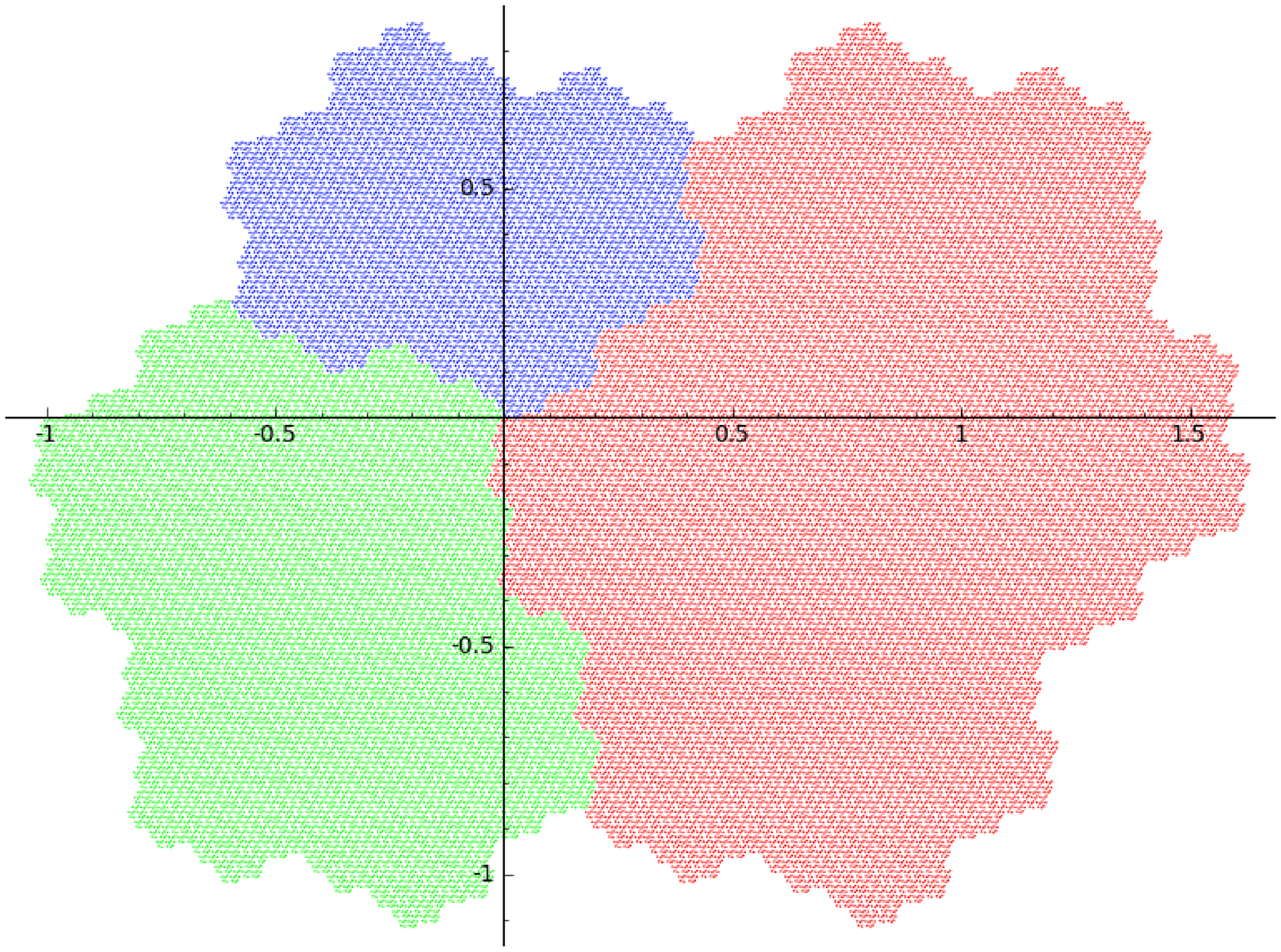}}
\vspace{0.25cm}
\scalebox{0.2}{\includegraphics{111.eps}}
\caption{\label{fig:tribo} Rauzy fractals of $\sigma_{1,1}$, $\sigma_{2,1}$ and $\Sigma_1$. }
\end{center}
\end{figure}

\medskip

\noindent 
\textbf{Example 3:}\\
In this example we consider the two  substitutions defined as follows: 
\begin{center}
$\sigma_1:\left\{
\begin{array}{ll}
a\rightarrow ab,\\
b\rightarrow ca,\\
c\rightarrow a,
\end{array}\right.$
\hspace{1cm} and \hspace{1cm} $\sigma_2:\left\{
\begin{array}{ll}
a\rightarrow ba,\\
b\rightarrow ac,\\
c\rightarrow a.
\end{array}\right.$
\end{center}
The substitution $\s_1$ is known as the flipped tribonacci substitution~(\cite{sirvent:1}).
When we apply the balanced pair algorithm to these two substitution, 
we obtain the substitution $\Sigma$ for intersection on $15$ symbols  defined as:
$$
\begin{array}{lllll}
A\rightarrow B, & B\rightarrow ACA, & C\rightarrow D, & D\rightarrow E, & E\rightarrow AFA,\\
F\rightarrow DGHGD, & G\rightarrow I, & H\rightarrow JKJ, & I\rightarrow J, & J\rightarrow ALA,\\
K\rightarrow AMAMA, & L\rightarrow DGD, & M\rightarrow N, & N\rightarrow AOA, & O\rightarrow AMAMAMA.
\end{array}
$$

The characteristic polynomial of the transition matrix of $\Sigma$ is
$$
(x - 1)  (x + 1)  (x^2 - x + 1)  (x^3 - x^2 - x - 1) 
 (x^3 + x^2 + x- 1)  (x^5 + x^4 - 2x^2 - 3x + 1).
$$
The Rauzy fractal associated with $\Sigma$, i.e., the intersection of the Rauzy fractals of $\s_1$ and $\s_2$ is shown in Figure~\ref{fig:ex3}.

\begin{figure}
\begin{center}
%\label{example3}
\scalebox{0.2}{\includegraphics{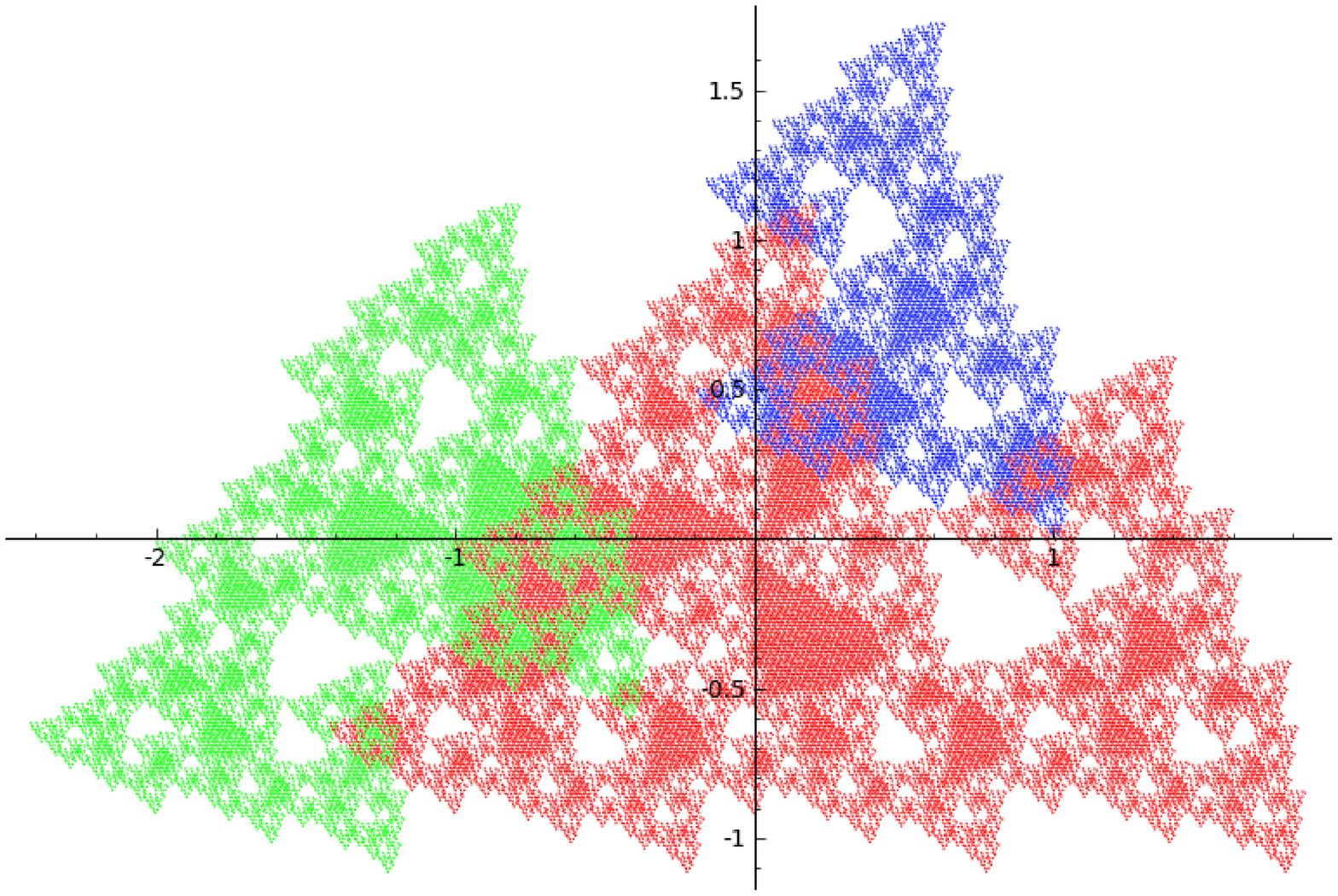}}
\hspace{0.2cm}
\scalebox{0.2}{\includegraphics{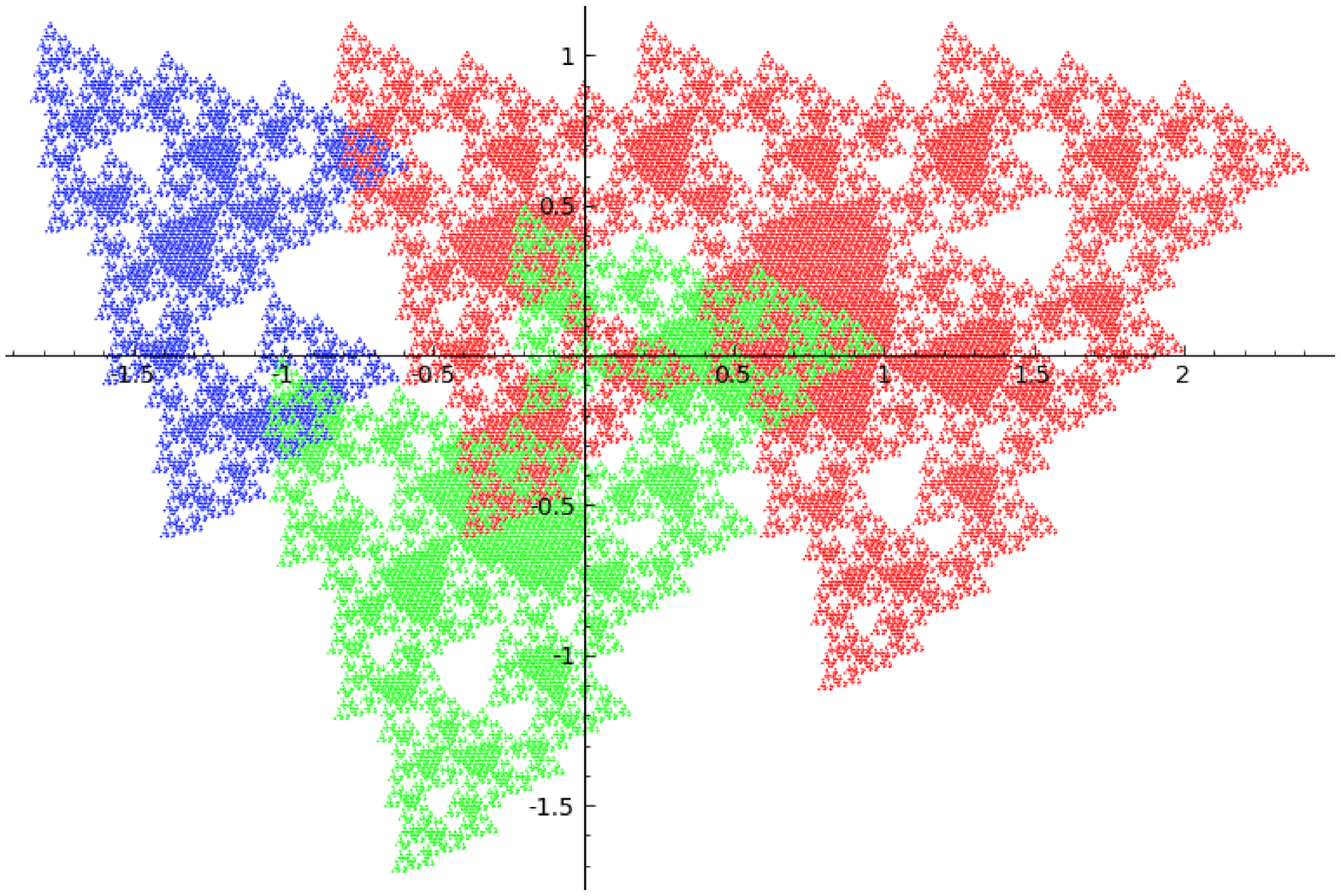}}
\vspace{0.25cm}
\scalebox{0.3}{\includegraphics{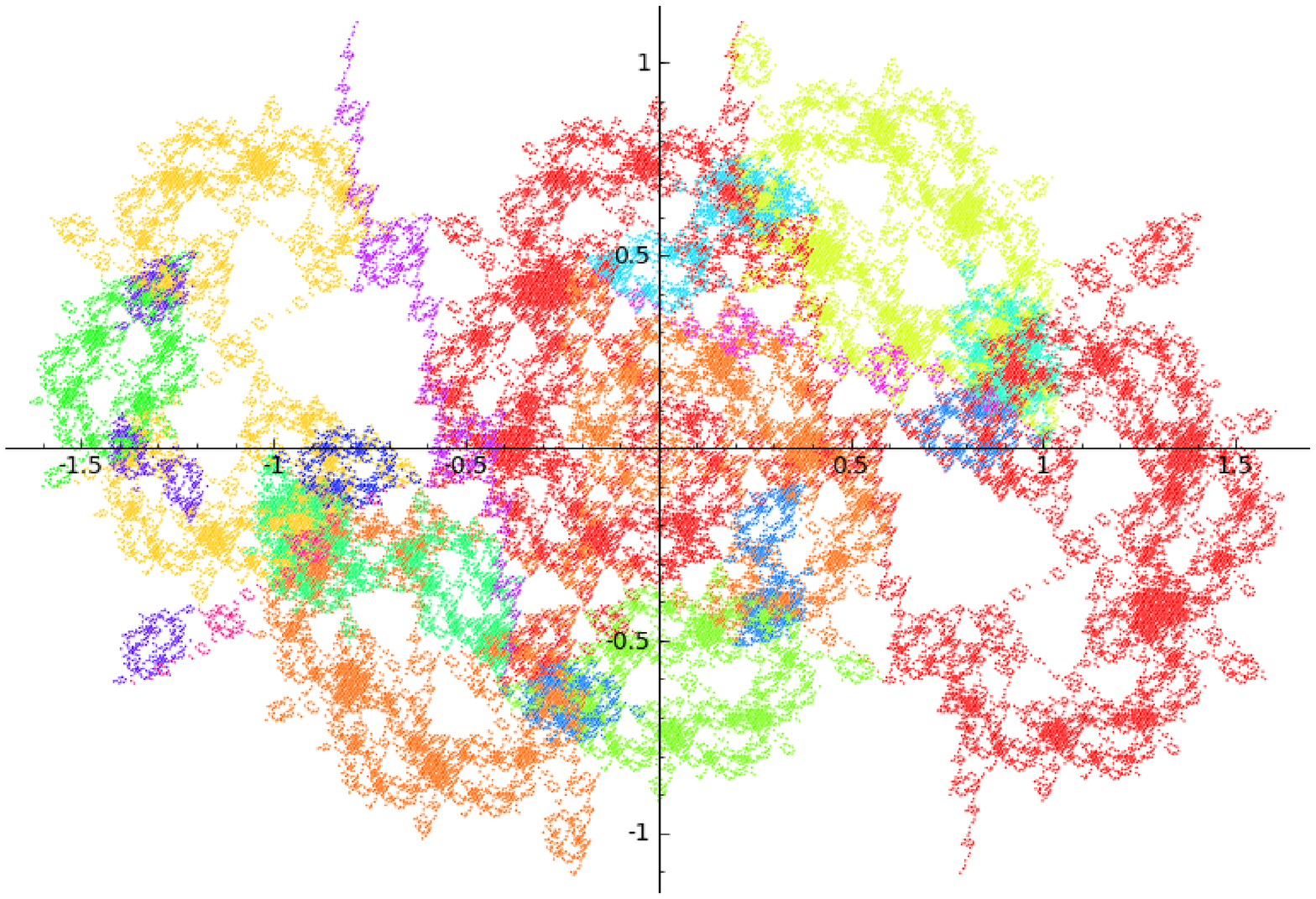}}
\caption{\label{fig:ex3} Rauzy fractals of Example 3 and their intersection. }
\end{center}
\end{figure}

\medskip

\noindent 
\textbf{Example 4:}\\
In the previous examples, we have seen that the substitution $\Sigma$ has the property: $\Sigma(U)$ is a palindrome for each symbol $U$ in the alphabet where $\Sigma$ is defined.  
Here we present an example in which this situation does not occur.
Consider the substitutions in two symbols, given by
\begin{center}
$\s_1:\left\{
\begin{array}{ll}
a\rightarrow aabbaabab\\
b\rightarrow ab
\end{array}\right.$
\hspace{1cm} and \hspace{1cm} $\s_2:\left\{
\begin{array}{ll}
a\rightarrow babaabbaa\\
b\rightarrow ba.
\end{array}\right.$
\end{center}
We remark that these substitutions are unimodular irreducible Pisot. 
When we run the balanced pair algorithm, we get the following balanced pairs: 
$$
\Biggl(\begin{matrix}
aabb\\
baba\\
\end{matrix}\Biggr),
\Biggl(\begin{matrix}
abab\\
bbaa\\
\end{matrix}\Biggr),
\Biggl(\begin{matrix}
aab\\
baa\\
\end{matrix}\Biggr),
\Biggl(\begin{matrix}
abaabb\\
bbaaba\\
\end{matrix}\Biggr),
\Biggl(\begin{matrix}
aabab\\
babaa\\
\end{matrix}\Biggr).
$$
If we denote them by $A, B,C,D, E$, respectively.
The resulting substitution on this alphabet is:
$$
\Sigma:\left\{
\begin{array}{l}
A\ra ACDEB,\\
B\ra AEDCB,\\
C\ra ACDCB,\\
D\ra AEDCDEB,\\
E\ra ACDEDCB.
\end{array}
\right.
$$
It can be observed that $\Sigma(U)$ is not a palindrome, for any $U$ in the alphabet.

The characteristic polynomial of the matrix associated with $\Sigma$ is
$$
x^2(x-1)(x^2-6x+1).
$$

\medskip

\noindent 
\textbf{Example 5:}\\
Let  $\s$ be the substitution on three symbols defined by
$$
\begin{array}{ccc}
a\ra abc, & b\ra a, & c\ra ac.
\end{array}
$$
It has a unique one-sided fixed point:
$$u:=\text{``}\s^{\infty}(a)\text{"}=abcaacabcabcacabcaacabca\cdots.$$
We conjecture the origin is a boundary point of its Rauzy fractal.

Let $\hat{\s}$ be its reversed substitution. It has a unique one-sided fixed point:
$$
v:=\text{``}\hat{\s}^{\infty}(c)\text{"}=cacbcaacbacacbacbacaacba\ldots.
$$

 We conjecture that there is no initial balanced pair between $u$ and $v$, i.e.
 the balanced pair algorithm it cannot be applied to $u$ and $v$. 
 If the conjecture is true, then the intersection of both Rauzy fractals is reduced to the origin.
 Figure~\ref{fig:ex5} shows the Rauzy fractals of $\s$ and $\hat{\s}$.
 
 \begin{figure}
\begin{center}
\scalebox{0.2}{\includegraphics{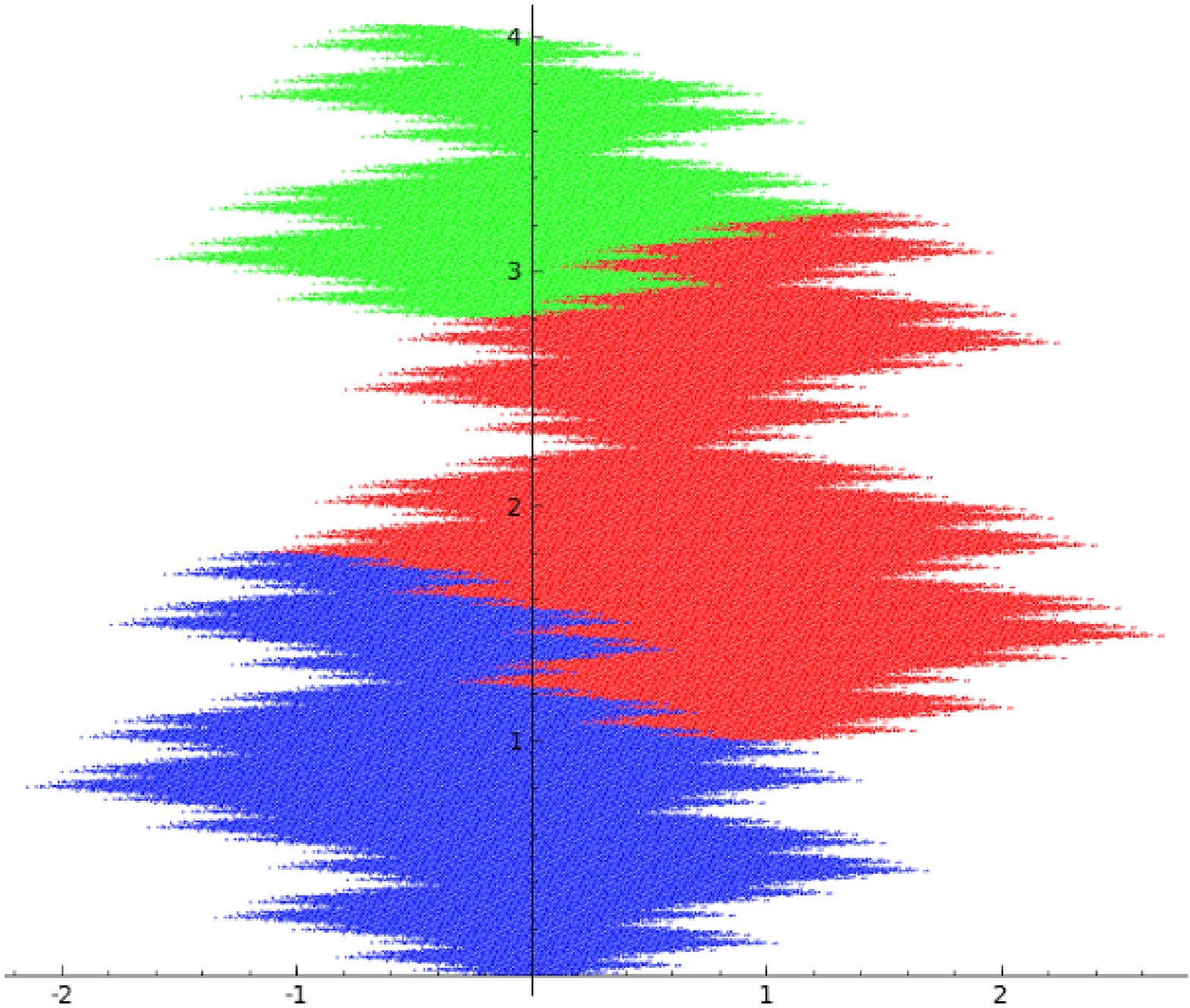}}
\hspace{0.2cm}
\scalebox{0.2}{\includegraphics{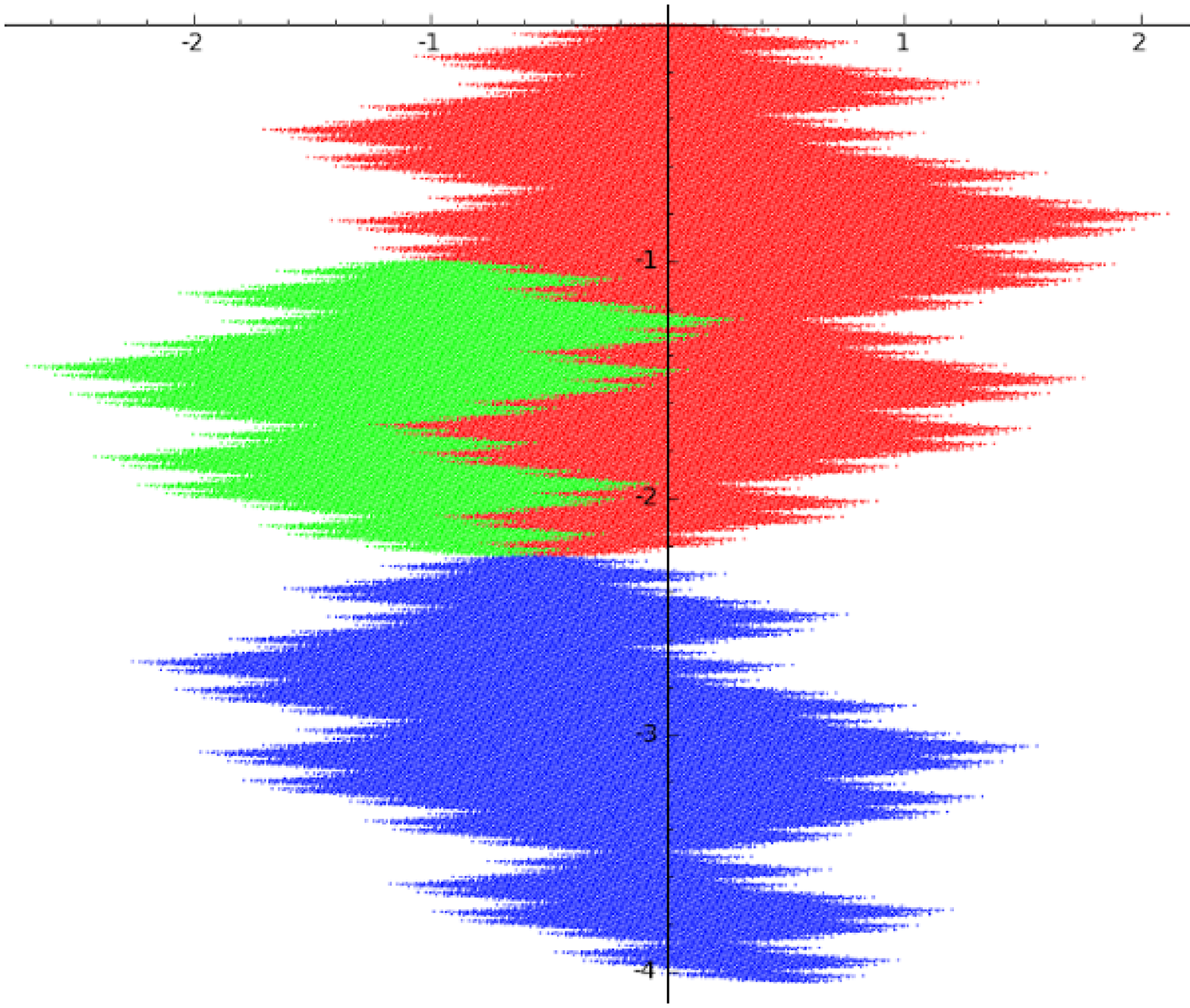}}
%\vspace{0.25cm}
%\scalebox{0.3}{\includegraphics{002.eps}}
\caption{\label{fig:ex5} Rauzy fractals of Example 5.}
\end{center}
\end{figure}

%%%%%%%%%%%%%%%%%%%%%%%%%%%%%%%%%%%%%%%%%%%%%%%%%%%%%%

\section{Remarks and open questions}\label{sec:remarks}
\begin{enumerate}
\item 
We say that a substitution $\s$ satisfies the strong coincidence condition on prefixes (respectively on suffixes) if for any two symbols $i,j\in\A$ then there exists $n\in\N$, $a\in\A$ and $p,q,r,t\in\A^*$, such that
$$
\begin{array}{lr}
\s^n(i)=pat \text{ and } \s^n(j)=qar,  & \text{ with } {\bf l}(p)={\bf l}(q)
\end{array}
$$
$$
\text{ (respectively } {\bf l}(t)={\bf l}(r) \text{).} 
$$
Every irreducible unimodular Pisot substitution in two symbols satisfies the strong coincidence condition~({\em cf.}~\cite{barge-diamond}).
It is conjectured that all irreducible unimodular Pisot substitutions satisfies the strong coincidence condition. 

Clearly a substitution  $\s$ satisfies the strong coincidence condition on prefixes (or suffixes) if and only if $\hat{\s}$ satisfies the strong coincidence condition on suffixes (or prefixes).

If $\s^n$ has a unique fixed point in $\A^{\N}$ for all $n\geq 1$ then it satisfies the strong coincidence condition.

\item The Rauzy fractal $\RRR_{\s}$ admits a partition
$\{\RRR_{\s}(1),\ldots,\RRR_{\s}(k)\}$, where 
$$
\RRR_{\s}(j):=\overline{\left\{\pi(L_n)\,|\, u_n=j \text{ and } n\in\N \right\}}.
$$
This partition is called the {\em natural decomposition} of $\RRR_{\s}$.
The set $\RRR_{\s}$ and its natural decomposition can be obtained as 
a fixed point of a graph directed iterated function system, for details see~\cite{sirvent-wang}.
If the substitution $\s$ satisfies the strong coincidence condition,
then  the sets
$\RRR_{\s}(j)$ are measure-wise disjoint~(\cite{arnoux-ito}). 

We note that the natural decomposition of  $\RRR_{\hat{\s}}$ does not have to be 
the reflection through the origin of the natural decomposition of $\RRR_{\s}$,
as it can be seen in Figures~\ref{fig:ex2-fractals} and~\ref{fig:tribo}, 
for some substitutions considered in Example 2.

\item Is it possible to generalize the construction described in this article in the reducible and/or non-unimodular case?
\item We wonder if it is possible to obtain subsets of Rauzy fractals with other symmetries, via the balanced pair algorithm.
\item Is the Hausdorff dimension of the boundary of the intersection set the same of the dimension of the boundary of the Rauzy fractal?
\item Let $p(x)$ be the characteristic polynomial of the matrix $M_{\s}$ and $q(x)$ its reciprocal.
In  examples 2 and 3, $p(x)$ and $q(x)$ are factors of
the characteristic polynomial of the matrix $M_{\Sigma}$, 
where $\Sigma$ is the substitution obtained by the balanced pair algorithm of $\s$ and $\hat{\s}$. 
In examples 1 and 4 we have that $p(x)=q(x)$.
We conjecture that if $p(x)\neq q(x)$ then $p(x)$ and $q(x)$ are factors of
the characteristic polynomial of the matrix $M_{\Sigma}$, when $\s$ is  unimodular irreducible Pisot.
\end{enumerate}
%%%%%%%%%%%%%%%%%%%%%%%%%%%%%%%%%%%%%%%%%%%%%%%%%%%%%%

\end{document}